\documentclass[12pt,twoside]{amsart}
\usepackage{amssymb}
\usepackage{amscd}
\usepackage{xypic}

\title[On log canonical singularities]{On isolated log canonical singularities with 
index one}
\dedicatory{Dedicated to Professor Shihoko Ishii on the 
occasion of her sixtieth birthday}
\author{Osamu Fujino} 

\subjclass[2010]{Primary 14B05; Secondary 14E30.}
\date{2011/11/11, version 1.52}

\keywords{log canonical singularities, Cohen--Macaulay, minimal model program, 
mixed Hodge structures, dual complexes}

\address{Department of Mathematics, Faculty of Science, 
Kyoto University, Kyoto 606-8502, Japan}
\email{fujino@math.kyoto-u.ac.jp}

%%%%%%%%%%%%%%%%%%%%%%%%%%%%%%%%%%%%%%%
% Local abbreviations
\newcommand{\Supp}[0]{{\operatorname{Supp}}}
\newcommand{\Exc}[0]{{\operatorname{Exc}}}
\newcommand{\Gr}[0]{{\operatorname{Gr}}}

% Definitions for new environments
% theorem style plain --- default
\newtheorem{thm}{Theorem}[section]
\newtheorem{lem}[thm]{Lemma}
\newtheorem{cor}[thm]{Corollary}
\newtheorem{prop}[thm]{Proposition}

\theoremstyle{definition}
\newtheorem{step}{Step}
\newtheorem{ex}[thm]{Example}
\newtheorem{defn}[thm]{Definition}
\newtheorem{rem}[thm]{Remark}
\newtheorem*{ack}{Acknowledgments}      
\newtheorem*{notation}{Notation}         
\newtheorem{say}[thm]{}
%%%%%%%%%%%%%%%%%%%%%%%%%%%%%%%%%%
\begin{document}
\bibliographystyle{amsalpha+}

\maketitle

\begin{abstract}
We give a method to investigate 
isolated log canonical singularities with index one which are not log terminal. 
Our method depends on the minimal model program. 
One of the main purposes is to show that our invariant coincides with 
Ishii's Hodge theoretic invariant. 
\end{abstract}

\tableofcontents

\section{Introduction} 

Let $P\in X$ be an $n$-dimensional isolated log canonical singularity 
with index one which is not log terminal. 
Let $f:Y\to X$ be a projective resolution such that $f$ is an isomorphism 
outside $P$ and that $\Supp f^{-1}(P)$ is a simple normal crossing 
divisor on $Y$. 
Then we can write 
$$
K_Y=f^*K_X+F-E 
$$
where $E$ and $F$ are effective Cartier divisors 
and have no common irreducible components. 
The divisor $E$ is sometimes called the 
{\em{essential divisor}} for $f$ (see \cite[Definition 7.4.3]{ishii-book} 
and \cite[Definition 2.5]{ishii-q}). 

In \cite[Propositions 1.4 and 3.7]{ishii}, 
Shihoko Ishii proves  
$$
R^{n-1}f_*\mathcal O_Y\simeq H^{n-1}(E, \mathcal O_E)\simeq \mathbb C. 
$$ 
For details, see \cite[Propositions 5.3.11, 5.3.12, 
7.1.13, 7.4.4, and Theorem 7.1.17]{ishii-book}. 
In this paper, we prove that 
$$
R^if_*\mathcal O_Y\simeq H^i(E, \mathcal O_E)
$$ 
for {\em{every}} $i>0$ (cf.~Proposition \ref{46}) 
and that 
$$
R^{n-1}f_*\mathcal O_Y\simeq \mathbb C(P) 
$$ 
(cf.~Remark \ref{4646}). 
Our proof depends on the minimal model theory and is different from 
Ishii's. 

By Shihoko Ishii, 
the singularity $P\in X$ is said to be of type $(0, i)$ if 
\begin{align*}
\Gr ^W_kH^{n-1}(E, \mathcal O_E)
=\begin{cases}
\mathbb C &\text{if}\quad  k=i\\ 
0 &\text{otherwise}
\end{cases}
\end{align*}
where $W$ is the weight filtration of the mixed Hodge 
structure on $H^{n-1}(E, \mathbb C)$. Note that 
$E$ is a projective connected simple normal crossing 
variety. 
Therefore, we have 
\begin{align*}
&\Gr ^W_kH^{n-1}(E, \mathcal O_{E})\\
&\simeq \Gr ^W_k \Gr ^0 _FH^{n-1}(E, \mathbb C)\\ 
&\simeq \Gr ^0_F\Gr ^W _k H^{n-1} (E, \mathbb C)
\end{align*}
where $F$ is the Hodge filtration. 
We also note that the type of $P\in X$ is independent of the choice of 
a resolution $f:Y\to X$ by \cite[Proposition 4.2]{ishii} 
(see also \cite[Proposition 7.4.6]{ishii-book}). 

On the other hand, we define 
$\mu(P\in X)$ by 
$$
\mu=\mu(P\in X)=\min \{ \dim W\, |\, W \ {\text{is a stratum of}}\  E\}
$$ 
(see \cite[Definition 4.12]{fujino2}). 
We prove that $P\in X$ is of type $(0, \mu)$, that is, 
Ishii's Hodge theoretic invariant coincides with our 
invariant $\mu$ (cf.~Theorem \ref{new2}). 
It was first obtained by Shihoko Ishii in \cite{ishii-letter}. 

By our method based on the minimal model program, 
we can prove the following properties of $E$. 
Let $E=\sum _i E_i$ be the irreducible decomposition. 
Then $\sum _{i\ne i_0}E_i|_{E_{i_0}}$ has at most two connected 
components for every irreducible component $E_{i_0}$ of $E$ (cf.~Remark \ref{rem48}). 
Let $W_1$ and $W_2$ be any two minimal strata of $E$. 
Then $W_1$ is birationally equivalent to $W_2$ (cf.~\ref{4111} and Remark \ref{rem48}). 
These results seem to be out of reach by the Hodge theoretic 
method.  

Let $\Gamma$ be the dual complex of $E$ and let $|\Gamma|$ be the topological 
realization of $\Gamma$. Then the dimension of $|\Gamma|$ is $n-1-\mu$ by the definition 
of $\mu$. 

From now on, we assume that $\mu(P\in X)=0$. In this case, 
we can prove that 
$$
H^i(E, \mathcal O_E)\simeq H^i(|\Gamma|, \mathbb C)
$$ 
for every $i$. 
Therefore, $P\in X$ is Cohen--Macaulay, equivalently, 
Gorenstein,  if and only if 
$$
H^i(|\Gamma|, \mathbb C)
=\begin{cases}
\mathbb C &\text{if}\quad  i=0, \ n-1, \\ 
0 &\text{otherwise.}
\end{cases}
$$
It is Theorem \ref{4-10}. 

Anyway, by this paper, our approach based on the minimal model program 
(cf.~\cite{fujino2}) 
becomes compatible with Ishii's Hodge theoretic method in \cite{ishii}, \cite{ishii-book}, 
and \cite{ishii-q}. Our approach is more geometric than Ishii's. 
From our point of view, the main result of \cite{iw} becomes almost obvious. 
We note that we do not use the notion of {\em{Du Bois singularities}}, which is 
one of the main ingredients of Ishii's Hodge theoretic 
approach. 

We summarize the contents of this paper. 
Section \ref{sec-a} is a preliminary section. 
In Section \ref{sec21}, we give a criterion of Cohen--Macaulayness. 
In Section \ref{sec22}, we investigate basic properties of 
dlt pairs. In Section \ref{sec23}, we explain the 
notion of {\em{dlt blow-ups}}, which is very useful in the 
subsequent sections. 
Section \ref{sec3} is devoted to the study of dlt pairs with 
torsion log canonical divisor. 
In Section \ref{sec2}, we investigate isolated lc singularities with 
index one which are not log terminal. 
In Section \ref{sec5}, we prove that our invariant $\mu$ coincides 
with Ishii's Hodge theoretic invariant. 
The main result (cf.~Theorem \ref{new1}) in Section \ref{sec5} 
can be applied to special fibers of semi-stable 
minimal models for varieties with trivial canonical 
divisor (cf.~\cite{fujino-ss}). 

\begin{notation}
Let $X$ be a normal variety and let $B$ be an effective 
$\mathbb Q$-divisor such that 
$K_X+B$ is $\mathbb Q$-Cartier. 
Then we can define the {\em{discrepancy}} 
$a(E, X, B)\in \mathbb Q$ for 
every prime divisor $E$ {\em{over}} $X$. 
If $a(E, X, B)\geq -1$ (resp.~$>-1$) for 
every $E$, then $(X, B)$ is called {\em{log canonical}} 
(resp.~{\em{kawamata log terminal}}). 
We sometimes abbreviate log canonical (resp.~kawamata log terminal) 
to {\em{lc}} (resp.~{\em{klt}}). 
When $(X, 0)$ is klt, we simply say that 
$X$ is {\em{log terminal}} ({\em{lt}}, for short). 

Assume that $(X, B)$ is log canonical. 
If $E$ is a prime divisor over $X$ 
such that 
$a(E, X, B)=-1$, 
then $c_X(E)$ is called a {\em{log canonical center}} 
({\em{lc center}}, for short) 
of $(X, B)$, where $c_X(E)$ is 
the closure of the 
image 
of $E$ on $X$. 

Let $T$ be a simple normal crossing variety (cf.~Definition \ref{def26}) and let 
$T=\sum _{i\in I} T_i$ be the irreducible decomposition. Then 
a {\em{stratum}} of $T$ is an irreducible component 
of $T_{i_1}\cap \cdots \cap T_{i_k}$ for some 
$\{i_1, \cdots, i_k\}\subset I$. 

Let $r$ be a rational number. 
The integral part $\llcorner r\lrcorner$ is 
the largest integer $\leq r$ and 
the fractional part $\{r\}$ is defined by $r-\llcorner r\lrcorner$. 
We put $\ulcorner r\urcorner =-\llcorner -r \lrcorner$ and 
call it the round-up of $r$. 
Let $D=\sum _{i=1}^r d_i D_i$ be a $\mathbb Q$-divisor 
where $D_i$ is a prime divisor for every $i$ and $D_i\ne D_j$ for 
$i\ne j$. 
We put $\llcorner D\lrcorner =
\sum \llcorner d_i\lrcorner D_i$, 
$\ulcorner D\urcorner =
\sum \ulcorner d_i\urcorner D_i$, 
$\{D\} =
\sum \{d_i\} D_i$, and 
$D^{=1}=\sum _{d_i=1}D_i$. 
\end{notation}

\begin{ack}
The first version of this paper was written in Nagoya in 2007. 
The author was partially supported by the Grant-in-Aid for Young Scientists 
(A) $\sharp$17684001 from JSPS. 
In 2011, he revised and expanded it in Kyoto. 
He was partially supported by the Grant-in-Aid for Young Scientists 
(A) $\sharp$20684001 from JSPS. 
He was also supported by the Inamori Foundation. 
He would like to thank Professor Shihoko Ishii very much 
for useful comments and her suggestive 
talks in Kyoto in the late 1990's. 
He thanks Professors Kenji Matsuki and Masataka Tomari for 
useful comments. 
He also thanks the referee for careful reading and many useful comments. 
Finally, he thanks Professor Shunsuke Takagi for useful comments, discussions, 
and questions. 
This paper is a supplement 
to \cite{fujino2}, \cite{ishii-q}, and \cite[Chapter 7]{ishii-book}.  
\end{ack}

In this paper, we will work over $\mathbb C$, the complex number 
field. We will freely make use of the standard notation and 
definition in \cite{km}. 

\section{Preliminaries}\label{sec-a}
In this section, we prove some preliminary results. 

\subsection{A criterion of Cohen--Macaulayness}\label{sec21} 
The main purpose of this subsection 
is to prove Corollary \ref{333}, which seems to be 
well known to experts. 
Here, we give a global proof based on the Kawamata--Viehweg vanishing 
theorem for the reader's convenience. 
See also the arguments in 
\cite[4.3.1]{fujino-book}.  

\begin{lem}\label{lem1}
Let $X$ be a normal variety with an isolated 
singularity $P\in X$. 
Let $f:Y\to X$ be any resolution. 
If $X$ is Cohen--Macaulay, 
then $R^if_*\mathcal O_Y=0$ for 
$0<i<n-1$, where 
$n=\dim X$. 
\end{lem}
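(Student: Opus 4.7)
The plan is to deduce the vanishing from relative Grothendieck duality combined with Grauert--Riemenschneider vanishing (the latter being the standard consequence of the Kawamata--Viehweg vanishing theorem on a compactification of $Y$). Since the statement is local at $P$, I would first shrink $X$ so that $P$ is the unique singular point and $X$ is quasi-projective.

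The central identity is relative duality for the proper birational morphism $f$. Since $Y$ is smooth of dimension $n$ one has $\omega_Y^{\bullet} = \omega_Y[n]$, and since $X$ is Cohen--Macaulay of dimension $n$ one has $\omega_X^{\bullet} = \omega_X[n]$, so the duality isomorphism
$$
Rf_* R\mathcal{H}om_Y(\mathcal{O}_Y, \omega_Y^{\bullet}) \simeq R\mathcal{H}om_X(Rf_*\mathcal{O}_Y,\, \omega_X^{\bullet})
$$
collapses to $Rf_*\omega_Y \simeq R\mathcal{H}om_X(Rf_*\mathcal{O}_Y,\, \omega_X)$. By Grauert--Riemenschneider we have $R^jf_*\omega_Y = 0$ for $j>0$, so the right-hand side is a single coherent sheaf placed in cohomological degree zero.

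Now I would bring in the isolated-singularity hypothesis. Since $f$ is an isomorphism outside $P$, the sheaves $R^if_*\mathcal{O}_Y$ for $i>0$ are coherent and supported at $P$, hence of finite length at $\mathcal{O}_{X,P}$. Because $\mathcal{O}_{X,P}$ is Cohen--Macaulay of dimension $n$ with dualizing module $\omega_{X,P}$, local duality yields $\mathcal{E}xt^p_X(R^if_*\mathcal{O}_Y,\,\omega_X) = 0$ for every $p\neq n$. Combined with $R^0f_*\mathcal{O}_Y = \mathcal{O}_X$ and the fiber-dimension vanishing $R^if_*\mathcal{O}_Y = 0$ for $i\geq n$, the hypercohomology spectral sequence
$$
E_2^{p,q} = \mathcal{E}xt^p_X(R^{-q}f_*\mathcal{O}_Y,\, \omega_X) \;\Longrightarrow\; \mathcal{H}^{p+q}\bigl(R\mathcal{H}om_X(Rf_*\mathcal{O}_Y,\, \omega_X)\bigr)
$$
has nonzero $E_2$-terms only at the corner $(0,0)$, giving $\omega_X$, and along the column $p=n$ at positions $q=-i$ with $1\leq i\leq n-1$.

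Since the abutment vanishes in every positive total degree and the only differential that can enter or leave the column $p=n$ from a nonzero source is $d_n\colon E_n^{0,0}\to E_n^{n,1-n}$ (which affects only total degree $1$), each term $E_2^{n,-i}$ with $2\leq i\leq n-1$ must already be zero at the $E_2$-page. Local duality then forces $R^{n-i}f_*\mathcal{O}_Y = 0$ in that range, which is the desired statement $R^jf_*\mathcal{O}_Y = 0$ for $0<j<n-1$. The real obstacle is essentially bookkeeping: carefully identifying which bidegrees support a possibly nonzero differential and checking that none of them can spoil the conclusion in the range $2\leq i\leq n-1$; both the relative duality and the Grauert--Riemenschneider input are standard ingredients.
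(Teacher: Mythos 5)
Your proof is correct, but it takes a genuinely different route from the paper's. The paper argues globally: after compactifying $X$, it runs the Leray spectral sequence $E_2^{p,q}=H^p(X,R^qf_*\mathcal O_Y\otimes L^{-1})\Rightarrow H^{p+q}(Y,f^*L^{-1})$ for a sufficiently ample line bundle $L$, kills the abutment in total degrees $<n$ by Kawamata--Viehweg on $Y$, and feeds in Cohen--Macaulayness through $E_2^{p,0}=H^p(X,L^{-1})=0$ for $p<n$; since the sheaves $R^qf_*\mathcal O_Y$ ($q>0$) are skyscrapers at $P$, the sequence collapses and gives the vanishing. You instead stay local and dualize: Cohen--Macaulayness enters through $\omega_X^{\bullet}=\omega_X[n]$ and through local duality for the finite-length modules $R^if_*\mathcal O_Y$ ($i>0$), while Grauert--Riemenschneider (itself a consequence of Kawamata--Viehweg, so the deep input is the same) concentrates $Rf_*\omega_Y$ in degree zero. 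Your version buys locality (no compactification, no auxiliary $L$) and, as a by-product of the one surviving differential $d_n$, the exact sequence
$$0\to f_*\omega_Y\to \omega_X\to \mathcal{E}xt^n_X(R^{n-1}f_*\mathcal O_Y,\omega_X)\to 0,$$
which identifies the Matlis dual of $R^{n-1}f_*\mathcal O_Y$ with $\omega_X/f_*\omega_Y$ --- consonant with Remark \ref{4646}. The paper's version buys the converse for free: the same twisted Leray sequence, read in the other direction, proves Lemma \ref{lem2} and hence the full criterion, Corollary \ref{333}, which is what the rest of the paper actually uses.

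One index slip to repair, though the intended argument is sound. Under your own convention, $E_2^{n,-i}=\mathcal{E}xt^n_X(R^if_*\mathcal O_Y,\omega_X)$ sits in total degree $n-i$, and the one possibly nonzero differential $d_n\colon E_n^{0,0}\to E_n^{n,1-n}$ touches only total degree $1$, that is, only the term with $i=n-1$. The correct assertion is therefore $E_2^{n,-i}=E_{\infty}^{n,-i}=0$ for $1\le i\le n-2$, which gives $R^if_*\mathcal O_Y=0$ for $0<i<n-1$ directly. As written (``$E_2^{n,-i}=0$ for $2\le i\le n-1$, hence $R^{n-i}f_*\mathcal O_Y=0$'') you have switched the meaning of $i$ from cohomological index to total degree midstream; read literally under your stated convention, your range includes $i=n-1$ and would assert the false vanishing of $R^{n-1}f_*\mathcal O_Y$, which is $\mathbb C$ in the paper's lc index-one setting. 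This is a one-line fix, not a gap.
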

\begin{proof}
Without loss of generality, we may assume 
that $X$ is projective. We consider the following 
spectral sequence 
$$
E^{p,q}_2=H^p(X, R^qf_*\mathcal O_Y\otimes 
L^{-1})\Rightarrow H^{p+q}(Y, f^*L^{-1})
$$ 
for a sufficiently ample line bundle $L$ on $X$. 
By the Kawamata--Viehweg vanishing theorem, 
$H^{p+q}(Y, f^*L^{-1})=0$ 
for $p+q<n$. On the other 
hand, $E^{p,0}_2=H^p(X, L^{-1})=0$ 
for $p<n$ since $X$ is Cohen--Macaulay. By using 
the exact sequence 
$$0\to E^{1,0}_2\to E^1\to 
E^{0,1}_2\to E^{2,0}_2\to E^2\to \cdots,$$ 
we obtain 
$E^{0,1}_2\simeq E^{2,0}_2=0$ when $n\geq 3$. 
This implies $R^1f_*\mathcal O_Y=0$. 
We note that 
$\Supp R^if_*\mathcal O_Y\subset \{P\}$ for 
every $i>0$. 
Inductively, we obtain 
$R^if_*\mathcal O_Y
\simeq H^0(X, R^if_*\mathcal O_Y\otimes 
L^{-1})=E^{0,i}_2\simeq E^{0, i}_{\infty}=0$ for 
$0<i<n-1$. 
\end{proof}

\begin{lem}\label{lem2}
Let $X$ be a normal projective $n$-fold 
and let $f:Y\to X$ 
be a resolution. Assume that 
$R^if_*\mathcal O_Y=0$ for $0<i<n-1$. Then 
$X$ is Cohen--Macaulay. 
\end{lem}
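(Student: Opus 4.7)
The strategy is to run the Leray spectral sequence of Lemma~\ref{lem1} in the opposite direction, and then invoke the standard cohomological criterion for Cohen--Macaulayness.

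Pick a sufficiently ample line bundle $L$ on the projective variety $X$ and consider
$$
E_2^{p,q}=H^p\bigl(X, R^qf_*\mathcal O_Y\otimes L^{-1}\bigr)\Rightarrow H^{p+q}\bigl(Y, f^*L^{-1}\bigr).
$$
Since $f^*L$ is nef and big on the smooth $Y$, Kawamata--Viehweg vanishing (via Serre duality on $Y$) gives $H^{p+q}(Y, f^*L^{-1})=0$ for $p+q<n$, exactly as in the proof of Lemma~\ref{lem1}.

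By hypothesis $R^qf_*\mathcal O_Y=0$ for $0<q<n-1$, and since the fibers of the birational morphism $f$ have dimension at most $n-1$, also $R^qf_*\mathcal O_Y=0$ for $q\geq n$. Hence only the two rows $q=0$ and $q=n-1$ of the $E_2$-page survive, so the only differential that can be nonzero is $d_n\colon E_n^{p,n-1}\to E_n^{p+n,0}$ and one obtains $E_\infty^{p,0}=E_2^{p,0}=H^p(X, L^{-1})$ for every $p\leq n-1$. Reading off the filtration on the vanishing abutment now forces $H^p(X, L^{-1})=0$ for every $p<n$: indeed, for $p<n-1$ only the row $q=0$ contributes to the zero group $H^p(Y,f^*L^{-1})$, while $H^{n-1}(X, L^{-1})=E_\infty^{n-1,0}$ appears as a subquotient of $H^{n-1}(Y,f^*L^{-1})=0$.

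To finish, one appeals to the standard criterion that, for a projective variety $X$ of dimension $n$, $X$ is Cohen--Macaulay if and only if $H^p(X, L^{-1})=0$ for every $p<n$ and every sufficiently ample line bundle $L$. This criterion is a consequence of Serre--Grothendieck duality: up to a shift, $H^p(X, L^{-1})^\vee\cong H^0\bigl(X, \mathcal H^{n-p}(\omega_X^\bullet)\otimes L\bigr)$ for $L$ large enough (via the hypercohomology spectral sequence and Serre vanishing applied to each cohomology sheaf of $\omega_X^\bullet$), and vanishing for all $p<n$ and all sufficiently ample $L$ forces $\omega_X^\bullet$ to be concentrated in a single degree, which is precisely Cohen--Macaulayness of $X$.

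The main obstacle is this last duality step; the spectral-sequence manipulation is itself a straightforward symmetric companion to the argument of Lemma~\ref{lem1}, but the cohomological characterization of Cohen--Macaulay projective varieties is the real content that one must either cite or verify.
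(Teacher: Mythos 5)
Your proof is correct and follows essentially the same route as the paper: the identical spectral sequence $E_2^{p,q}=H^p(X, R^qf_*\mathcal O_Y\otimes L^{-1})\Rightarrow H^{p+q}(Y, f^*L^{-1})$ killed by the Kawamata--Viehweg vanishing theorem, yielding $H^p(X, L^{-1})=0$ for all $p<n$, followed by the cohomological criterion for Cohen--Macaulayness. The only differences are cosmetic: you collapse the spectral sequence to the two surviving rows $q=0$ and $q=n-1$ and read off $E_\infty^{p,0}=E_2^{p,0}$ directly, where the paper runs an induction through the low-degree exact sequences, and you sketch a duality proof of the final criterion where the paper simply cites \cite[Corollary 5.72]{km}.
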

\begin{proof}
It is sufficient to prove 
$H^i(X, L^{-1})=0$ for any 
ample line bundle $L$ on $X$ for all 
$i<n$ (see \cite[Corollary 5.72]{km}). 
We consider the spectral sequence 
$$
E^{p,q}_2=H^p(X, R^qf_*\mathcal O_Y\otimes 
L^{-1})\Rightarrow H^{p+q}(Y, f^*L^{-1}). 
$$ 
As before, $H^{p+q}(Y, f^*L^{-1})=0$ for $p+q<n$ by 
the Kawamata--Viehweg vanishing theorem. 
By the exact sequence 
$$0\to E^{1,0}_2\to E^1\to 
E^{0,1}_2\to E^{2,0}_2\to E^2\to \cdots,$$ 
we obtain $H^1(X, L^{-1})=0$ and $H^2(X, L^{-1})=0$ if 
$n\geq 3$. 
Inductively, we can check that 
$H^i(X, L^{-1})=E^{i, 0}_2\simeq E^{i, 0}_{\infty}=0$ for 
$i<n$. 
We finish the proof. 
\end{proof}

Combining the above two lemmas, we obtain 
the next corollary. 

\begin{cor}\label{333}
Let $P\in X$ be a normal isolated 
singularity and let $f:Y\to X$ be a resolution. 
Then $X$ is Cohen--Macaulay if and only if 
$R^if_*\mathcal O_Y=0$ for $0<i<n-1$, 
where $n=\dim X$. 
\end{cor}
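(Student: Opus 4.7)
The corollary is a direct packaging of Lemmas \ref{lem1} and \ref{lem2}, which are stated for projective $X$. My plan is to reduce the local setting of the corollary to the projective setting by a standard compactification argument.

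First I would observe that both sides of the equivalence are local around $P$: Cohen--Macaulayness at $P$ is local by definition, and each $R^if_*\mathcal O_Y$ with $i>0$ is supported in $\{P\}$ since $f$ is an isomorphism over $X\setminus\{P\}$. Hence I may shrink $X$ to an affine open neighborhood of $P$ on which $P$ is the only singular point.

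Next I would compactify: take a projective closure of this affine $X$ and normalize to obtain a normal projective variety $\overline X\supset X$. Since $\overline X$ may acquire additional singularities away from $P$, I would apply Hironaka's theorem on $\overline X\setminus\{P\}$ to obtain a projective birational morphism $\widetilde X\to \overline X$ which is an isomorphism over a neighborhood of $P$ and a resolution outside $P$. The resulting $\widetilde X$ is then a normal projective $n$-fold having $P$ as its only singular point, and it contains a neighborhood of $P\in X$ as an open subset. I would then choose a resolution $\widetilde f:\widetilde Y\to\widetilde X$ agreeing with $f$ over this neighborhood.

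Finally, since $R^i\widetilde f_*\mathcal O_{\widetilde Y}$ is supported at $P$ and agrees there with $R^if_*\mathcal O_Y$, and since $\widetilde X$ is Cohen--Macaulay if and only if $X$ is Cohen--Macaulay at $P$, the two implications follow immediately from Lemma \ref{lem1} and Lemma \ref{lem2} applied to $\widetilde X$ and $\widetilde f$. The main (and only real) obstacle is setting up the compactification so that it does not disturb the local picture at $P$, which is handled by a routine application of Hironaka's theorem.
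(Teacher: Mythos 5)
Your proposal is correct and takes essentially the same route as the paper, whose proof of Corollary \ref{333} is exactly your plan in compressed form: shrink to an affine neighborhood of $P$, compactify so that $X$ may be assumed projective, and then apply Lemmas \ref{lem1} and \ref{lem2}; you have simply made explicit the routine normalization and boundary-resolution steps that the paper leaves implicit. One small inaccuracy: the corollary does not assume that $f$ is an isomorphism over $X\setminus \{P\}$, but your claim that $R^if_*\mathcal O_Y$ ($i>0$) is supported in $\{P\}$ still holds, since $R^if_*\mathcal O_Y$ is independent of the chosen resolution (equivalently, smooth points are rational), so this does not affect the argument.
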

\begin{proof}
We shrink $X$ and assume that 
$X$ is affine. 
Then we compactify $X$ and may assume 
that $X$ is projective. 
Therefore, we can apply Lemmas \ref{lem1} and 
\ref{lem2}. 
\end{proof}

\subsection{Basic properties of dlt pairs}\label{sec22} 
In this subsection, we prove supplementary results on dlt pairs. 
For the definition of dlt pairs, see \cite[Definition 2.37, Theorem 2.44]{km}. 
See also \cite{fujino-what} for details of singularities of pairs.  

The following proposition generalizes \cite[17.5 Corollary]{fa}, 
where it was only proved that $S$ is semi-normal and $S_2$. 
In the subsequent sections, we will use the arguments 
in the proof of Proposition \ref{31}. 

\begin{prop}[{cf.~\cite[Theorem 4.4]{fujino-book}}]\label{31} 
Let $(X, \Delta)$ be a dlt pair and 
let $\llcorner \Delta\lrcorner =: 
S=S_1+\cdots +S_k$ be the irreducible 
decomposition. 
We put $T=S_1+\cdots +S_l$ for $1\leq l \leq k$. 
Then $T$ is semi-normal, Cohen--Macaulay, and has only 
Du Bois singularities. 
\end{prop}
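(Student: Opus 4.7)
The plan is to realize $T$ as the image of a simple normal crossing variety under a proper morphism that induces a quasi-isomorphism on structure sheaves, and then to transfer the three desired properties from the resolved variety down to $T$. First I would take a log resolution $f\colon Y\to X$ of $(X,\Delta)$ that is an isomorphism over the locus where $(X,\Delta)$ is already log smooth. Because $(X,\Delta)$ is dlt, every $f$-exceptional prime divisor has discrepancy strictly greater than $-1$, and we may write
$$K_Y+\widetilde{\Delta}+\sum_i\varepsilon_i E_i=f^*(K_X+\Delta)+\sum_j \delta_j E_j,$$
with $\widetilde{\Delta}$ the strict transform of $\Delta$, the $E_i$ and $E_j$ the $f$-exceptional primes, $0\le \varepsilon_i<1$, and $\delta_j\ge 0$. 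Let $T_Y$ denote the sum of the strict transforms of $S_1,\dots,S_l$. Since $f$ is a log resolution, $T_Y$ is a simple normal crossing divisor on $Y$, hence a simple normal crossing variety, and the induced map $T_Y\to T$ is proper and surjective.

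The heart of the argument is to show that the natural morphism $\mathcal{O}_T\to Rf_*\mathcal{O}_{T_Y}$ is a quasi-isomorphism in $D^b(T)$. Applying $Rf_*$ to
$$0\to \mathcal{O}_Y(-T_Y)\to \mathcal{O}_Y\to \mathcal{O}_{T_Y}\to 0$$
and using that dlt singularities are rational (so that $f_*\mathcal{O}_Y=\mathcal{O}_X$ and $R^if_*\mathcal{O}_Y=0$ for $i>0$), the task reduces to proving $R^if_*\mathcal{O}_Y(-T_Y)=0$ for $i>0$ together with $f_*\mathcal{O}_Y(-T_Y)=\mathcal{I}_T$. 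Rearranging the discrepancy identity, one can write $-T_Y\sim_{\mathbb Q,f} K_Y+B$ for an effective $\mathbb Q$-divisor $B$ with simple normal crossing support and $\llcorner B\lrcorner=0$; the relative Kawamata--Viehweg vanishing theorem (in Fujino's generality for simple normal crossing pairs) then yields the higher vanishing, and the ideal sheaf identification follows locally on $X$.

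Granted the quasi-isomorphism $\mathcal{O}_T\simeq Rf_*\mathcal{O}_{T_Y}$, the three conclusions follow from the fact that $T_Y$ is a simple normal crossing variety. Semi-normality of $T$ is immediate from $f_*\mathcal{O}_{T_Y}=\mathcal{O}_T$ and semi-normality of $T_Y$. Cohen--Macaulayness follows by a spectral sequence argument in the style of Lemmas \ref{lem1} and \ref{lem2}, applied after compactification and using the Cohen--Macaulayness of $T_Y$ together with the vanishing of its higher direct images. Du Boisness follows by the criterion of Koll\'ar--Kov\'acs: since $T_Y$ is Du Bois and $\mathcal{O}_T\to Rf_*\mathcal{O}_{T_Y}$ is already a quasi-isomorphism, it has a left inverse in the derived category, so $T$ is Du Bois.

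The main obstacle I anticipate is the bookkeeping in the second step, specifically expressing $-T_Y$ as $K_Y+B$ with $\llcorner B\lrcorner=0$ and simple normal crossing support. The components $S_1,\dots,S_l$ enter $\Delta$ with coefficient exactly one, so a naive rearrangement of the discrepancy identity produces boundary terms of coefficient one that would spoil the Kawamata--Viehweg hypothesis. A careful perturbation of the $\varepsilon_i$ (using the strict inequality $\varepsilon_i<1$ available from the dlt assumption) and a separation into the components of $T_Y$ versus the remaining components of $\llcorner\Delta\lrcorner$ is required so that the round-down of $B$ is empty and Fujino's vanishing applies.
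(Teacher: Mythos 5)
Your outline has the right overall shape (a twist-and-vanish argument on a log resolution, then transfer of the three properties), but the step you yourself flag as the main obstacle is a genuine gap, and the perturbation you propose cannot close it. Write $S=T+U$ with $U=S_{l+1}+\cdots+S_k$, and let $T'$, $U'$, $B'$ denote the strict transforms of $T$, $U$, $\{\Delta\}$ on $Y$. Then
$$-T' \sim_{\mathbb Q, f} K_Y+U'+B'-E,$$
and there are two separate problems. First, the components of $U'$ are \emph{not} $f$-exceptional, so their coefficient $1$ cannot be perturbed away: any other representative $K_Y+B$ of the same relative $\mathbb Q$-linear equivalence class differs by $f^*D$ for some $\mathbb Q$-Cartier $D$ on $X$, and any $D$ that lowers the coefficient along $U$ simultaneously subtracts $\epsilon(f^*U-U')$, an effective exceptional divisor, so $B$ acquires negative coefficients on the exceptional divisors lying over $U$; tweaking the exceptional coefficients $\varepsilon_i$ never touches $U'$ at all. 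Hence $B$ effective with $\llcorner B\lrcorner =0$ is unattainable whenever $l<k$. Second, even setting $U'$ aside, the untwisted sheaf $\mathcal O_Y(-T')$ is the wrong object: one has $-T'-(K_Y+U'+B'+\{-E\})\sim_{\mathbb Q, f}-\ulcorner E\urcorner$, which is not $f$-nef once $\ulcorner E\urcorner\ne 0$, so no form of relative vanishing applies to $\mathcal O_Y(-T')$ directly. (If you push the perturbation idea honestly, via the round-up form of Kawamata--Viehweg, the round-ups force extra effective exceptional twists into the divisor, i.e., you are led back to twisting the short exact sequence — which is exactly what the paper does.)

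The paper resolves both points at once. It twists by $\ulcorner E\urcorner$, considering $0\to \mathcal O_Y(-T'+\ulcorner E\urcorner)\to \mathcal O_Y(\ulcorner E\urcorner)\to \mathcal O_{T'}(\ulcorner E|_{T'}\urcorner)\to 0$, keeps the reduced part $U'$ in the boundary, and invokes the vanishing theorem of \emph{Reid--Fukuda type}, which tolerates coefficient-one components provided the relatively trivial divisor is log big over $X$; log bigness holds precisely because the resolution is chosen to be an isomorphism over the generic point of every lc center of $(X, S+B)$ (possible since dlt pairs are snc at such points — your choice of $f$ iso over the log smooth locus gives this too, but you never use it where it matters). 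This is the missing input; plain klt Kawamata--Viehweg, however perturbed, does not suffice. Note also a structural difference: this route initially yields only $\mathcal O_T\simeq f_*\mathcal O_{T'}(\ulcorner E|_{T'}\urcorner)$, hence a \emph{splitting} $\mathcal O_T\to Rf_*\mathcal O_{T'}\to \mathcal O_T$ rather than your quasi-isomorphism. A splitting already suffices for Cohen--Macaulayness — via Grothendieck duality and $R^if_*\omega_{T'}=0$, bounding $h^i(\omega^\bullet_T)$ as in Kov\'acs (the spectral-sequence argument of Lemmas \ref{lem1} and \ref{lem2} does not apply as stated, since $T$ is neither normal nor with isolated singularities) — and for Du Bois-ness by Kov\'acs' criterion. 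The full statement $Rf_*\mathcal O_{T'}\simeq \mathcal O_T$, which you posit as the heart of the proof, is only recovered at the very end, from the torsion-freeness of $f_*\omega_{T'}$, the isomorphism $f_*\omega_{T'}\simeq \omega_T$, and duality.
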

\begin{proof}
We put $B=\{\Delta\}$. 
Let $f:Y\to X$ be a resolution such that 
$K_Y+S'+B'=f^*(K_X+S+B)+E$ with the 
following properties:~
(i) $S'$ (resp.~$B'$) is 
the strict transform of $S$ (resp.~$B$), 
(ii) $\Supp (S'+B')\cup \Exc (f)$ and $\Exc(f)$ are 
simple normal crossing divisors on $Y$, 
(iii) $f$ is an isomorphism over the 
generic point of every lc center of 
$(X, S+B)$, and (iv) $\ulcorner E\urcorner \geq 0$. 
We write $S=T+U$. Let $T'$ (resp.~$U'$) 
be the strict transform of $T$ (resp.~$U$) 
on $Y$. We consider the following 
short exact sequence 
$$0\to \mathcal O_Y(-T'+\ulcorner E\urcorner)\to 
\mathcal O_Y(\ulcorner E\urcorner)\to \mathcal O_{T'}(\ulcorner 
E|_{T'}\urcorner)\to 0. $$ 
Since 
$-T'+E\sim _{\mathbb Q, f}K_Y+U'+B'$ and 
$E\sim _{\mathbb Q, f}K_Y+S'+B'$, 
we have $-T'+\ulcorner E\urcorner \sim _{\mathbb Q, f}K_Y
+U'+B'+\{-E\}$ and 
$\ulcorner E\urcorner \sim _{\mathbb Q, f} K_Y+S'+B'+\{-E\}$. 
By the vanishing theorem of Reid--Fukuda type (see, for example, 
\cite[Lemma 4.10]{fujino-book}), 
$$R^if_*\mathcal O_Y(-T'+\ulcorner E\urcorner)=R^if_*\mathcal O_Y(\ulcorner 
E\urcorner)=0$$ for every $i>0$. 
Note that we used 
the assumption that $f$ is an isomorphism over the generic point of every lc center 
of $(X, S+B)$. 
Therefore, we have $$0\to 
f_*\mathcal O_Y(-T'+\ulcorner E\urcorner)\to 
\mathcal O_X\to f_*\mathcal O_{T'}(\ulcorner 
E|_{T'}\urcorner)\to 0$$ and 
$R^if_*\mathcal O_{T'}(\ulcorner E|_{T'}\urcorner)=0$ 
for all $i>0$. Note that $\ulcorner E\urcorner $ is effective 
and $f$-exceptional. 
Thus, $\mathcal O_T\simeq 
f_*\mathcal O_{T'}\simeq f_*\mathcal O_{T'} 
(\ulcorner E'|_{T'}\urcorner)$. 
Since $T'$ is a simple normal crossing divisor, 
$T$ is semi-normal. 
By the above vanishing result, 
we obtain $Rf_*\mathcal O_{T'}(\ulcorner 
E|_{T'}\urcorner)\simeq 
\mathcal O_T$ in the derived category. 
Therefore, the composition 
$\mathcal O_T\to R f_*\mathcal O_{T'}\to 
Rf_*\mathcal O_{T'}(\ulcorner E|_{T'}\urcorner)\simeq 
\mathcal O_T$ is 
a quasi-isomorphism. 
Apply $R\mathcal Hom_T (\underline{\ \ \ } , \omega^{\bullet}_T)$ 
to the quasi-isomorphism 
$\mathcal O_T\to Rf_*\mathcal O_{T'}\to \mathcal O_T$. 
Then the composition 
$\omega^{\bullet}_T\to R f_*\omega^{\bullet}_{T'} 
\to \omega^{\bullet}_T$ is a quasi-isomorphism 
by the Grothendieck duality. 
By the vanishing theorem (see, for 
example, \cite[Lemma 2.33]{fujino-book}), 
$R^if_*\omega_{T'}=0$ for 
$i>0$. Hence, 
$h^i(\omega^{\bullet}_T)\subseteq R^if_*\omega^{\bullet}_{T'} 
\simeq R^{i+d}f_*\omega_{T'}$, where 
$d=\dim T=\dim T'$. Therefore, $h^i(\omega^{\bullet}_T)=0$ for $i>-d$. 
Thus, $T$ is Cohen--Macaulay. 
This argument is the same as the proof 
of Theorem 1 in \cite{kovacs2}. Since 
$T'$ is a simple normal crossing divisor, $T'$ has only Du Bois 
singularities. 
The quasi-isomorphism $\mathcal O_T\to 
Rf_*\mathcal O_{T'}\to \mathcal O_T$ implies that $T$ has 
only Du Bois 
singularities (cf.~\cite[Corollary 2.4]{kovacs1}). 
Since $T'$ is a simple normal crossing divisor on $Y$ and $\omega_{T'}$ is an invertible 
sheaf on $T'$, every associated prime of $\omega_{T'}$ is the generic point of some irreducible 
component of $T'$. By $f$, 
every irreducible component of $T'$ is mapped birationally onto 
an irreducible component of $T$. Therefore, $f_*\omega_{T'}$ is torsion-free on $T$. 
Since the composition $\omega_T\to f_*\omega_{T'}\to \omega_T$ 
is an isomorphism, we obtain 
$f_*\omega_{T'}\simeq \omega_T$. 
It is because $f_*\omega_{T'}$ is torsion-free and 
$f_*\omega_{T'}$ is generically isomorphic to $\omega_T$. 
By the Grothendieck duality, 
$$Rf_*\mathcal O_{T'}\simeq 
R\mathcal Hom _T(Rf_*\omega^{\bullet}_{T'}, 
\omega^{\bullet}_{T})\simeq 
R\mathcal Hom _T(\omega^{\bullet}_T, 
\omega^{\bullet}_T)\simeq \mathcal O_T. $$ 
So, $R^if_*\mathcal O_{T'}=0$ for all $i>0$. 
\end{proof}

We obtain the following vanishing theorem 
in the proof of Proposition \ref{31}. 

\begin{cor}\label{35} 
Under the notation in the proof of {\em{Proposition \ref{31}}}, 
$R^if_*\mathcal O_{T'}=0$ for 
every $i>0$ and $f_*\mathcal O_{T'}\simeq 
\mathcal O_T$.  
\end{cor}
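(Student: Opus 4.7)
The plan is to observe that both assertions have already been established inside the proof of Proposition \ref{31}, so the proof of the corollary is essentially an extraction of the relevant steps. No new geometric input is needed; only a careful reading of the existing argument.

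First I would address $f_*\mathcal O_{T'} \simeq \mathcal O_T$. Recall from the proof that the Reid--Fukuda type vanishing gives $R^if_*\mathcal O_Y(-T'+\ulcorner E\urcorner) = R^if_*\mathcal O_Y(\ulcorner E\urcorner) = 0$ for $i>0$. Pushing forward the short exact sequence
\begin{equation*}
0 \to \mathcal O_Y(-T'+\ulcorner E\urcorner) \to \mathcal O_Y(\ulcorner E\urcorner) \to \mathcal O_{T'}(\ulcorner E|_{T'}\urcorner) \to 0
\end{equation*}
and using that $\ulcorner E\urcorner$ is effective and $f$-exceptional (so $f_*\mathcal O_Y(\ulcorner E\urcorner) = \mathcal O_X$) yields the exact sequence displayed in the proof of Proposition \ref{31}, from which $\mathcal O_T \simeq f_*\mathcal O_{T'} \simeq f_*\mathcal O_{T'}(\ulcorner E|_{T'}\urcorner)$.

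Next I would handle the higher direct images. The proof of Proposition \ref{31} already shows, via Grothendieck duality applied to the quasi-isomorphism $\mathcal O_T \to Rf_*\mathcal O_{T'} \to \mathcal O_T$, together with the identification $f_*\omega_{T'} \simeq \omega_T$ (which follows from torsion-freeness of $f_*\omega_{T'}$ and generic agreement) and $R^if_*\omega_{T'} = 0$ for $i>0$, that
\begin{equation*}
Rf_*\mathcal O_{T'} \simeq R\mathcal Hom_T(Rf_*\omega^{\bullet}_{T'}, \omega^{\bullet}_T) \simeq R\mathcal Hom_T(\omega^{\bullet}_T, \omega^{\bullet}_T) \simeq \mathcal O_T.
\end{equation*}
Taking cohomology sheaves of this quasi-isomorphism gives $R^if_*\mathcal O_{T'} = 0$ for every $i>0$, completing the proof.

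There is no real obstacle here: the corollary is essentially a restatement of a vanishing extracted from the proof of Proposition \ref{31}, recorded separately because it will be used repeatedly later in the paper. The only point to be careful about is to make clear that the vanishing $R^if_*\mathcal O_{T'}=0$ for $i>0$ is really a byproduct of the Grothendieck duality computation (combined with the vanishing of $R^if_*\omega_{T'}$), rather than a direct application of a Kawamata--Viehweg style vanishing to $\mathcal O_{T'}$ itself.
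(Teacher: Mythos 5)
Your proposal is correct and matches the paper exactly: the paper gives Corollary \ref{35} no separate proof, stating only that it is obtained in the proof of Proposition \ref{31}, and your extraction identifies the right two ingredients there --- the Reid--Fukuda vanishing applied to the displayed short exact sequence for $f_*\mathcal O_{T'}\simeq \mathcal O_T$, and the Grothendieck duality computation (via $f_*\omega_{T'}\simeq \omega_T$ and $R^if_*\omega_{T'}=0$) for $R^if_*\mathcal O_{T'}=0$, $i>0$. Your closing caveat, that the higher vanishing is a byproduct of duality rather than a direct Kawamata--Viehweg type statement for $\mathcal O_{T'}$, is exactly the right reading of the paper's argument.
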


We close this subsection with a useful lemma for simple normal crossing varieties. 

\begin{defn}[Normal crossing and simple normal crossing varieties]\label{def26}
A variety $X$ has {\em{normal crossing singularities}} 
if, for every closed point $x\in X$, 
$$
\widehat{\mathcal O}_{X, x}\simeq \frac{\mathbb C[[x_0, \cdots, x_N]]}{(x_0\cdots x_k)}
$$ 
for some $0\leq k\leq N$, where 
$N=\dim X$. 
Furthermore, if each irreducible component of $X$ is smooth, 
$X$ is called a {\em{simple normal crossing}} variety. 
\end{defn}

\begin{lem}\label{lemA}
Let $f:V_1\to V_2$ be a birational morphism between projective 
simple normal crossing varieties. 
Assume that there is a Zariski open subset $U_1$ {\em{(}}resp.~$U_2${\em{)}} of $V_1$ 
{\em{(}}resp.~$V_2${\em{)}} such that 
$U_1$ {\em{(}}resp.~$U_2${\em{)}} contains the generic point of any stratum of $V_1$ 
{\em{(}}resp.~$V_2${\em{)}} and that 
$f$ induces an isomorphism between $U_1$ and $U_2$. 
Then $R^if_*\mathcal O_{V_1}=0$ for every $i>0$ and $f_*\mathcal O_{V_1}\simeq 
\mathcal O_{V_2}$. 
\end{lem}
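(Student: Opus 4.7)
My plan is to mimic the Grothendieck-duality argument used at the end of the proof of Proposition \ref{31} (and applied in Corollary \ref{35}), now directly to $f\colon V_1\to V_2$. The aim is to first establish $Rf_*\omega_{V_1}^{\bullet}\simeq \omega_{V_2}^{\bullet}$ in the derived category and then dualize it to obtain $Rf_*\mathcal O_{V_1}\simeq \mathcal O_{V_2}$, which gives both conclusions of the lemma simultaneously.

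The first step is to unpack the hypothesis on $U_1$ and $U_2$. Since $U_i$ contains the generic point of every stratum of $V_i$ and $f$ restricts to an isomorphism $U_1\simeq U_2$, each stratum of $V_1$, and in particular each irreducible component, is mapped birationally onto a stratum (respectively, an irreducible component) of $V_2$. This sets up an $f$-compatible bijective correspondence between the strata of $V_1$ and those of $V_2$.

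Next I would produce the two ingredients needed for Grothendieck duality. Since $V_1$ is a simple normal crossing variety, the vanishing theorem for SNC varieties in the spirit of \cite[Lemma 2.33]{fujino-book} yields $R^if_*\omega_{V_1}=0$ for every $i>0$. For the zeroth direct image, $\omega_{V_1}$ is invertible on $V_1$, so every associated prime of $\omega_{V_1}$ is the generic point of an irreducible component of $V_1$; by the first step these map birationally onto the generic points of the components of $V_2$, hence $f_*\omega_{V_1}$ is a torsion-free $\mathcal O_{V_2}$-module. The natural trace map $f_*\omega_{V_1}\to \omega_{V_2}$ is an isomorphism on $U_2$, and the torsion-free-plus-generic-isomorphism argument used at the end of the proof of Proposition \ref{31} upgrades it to a global isomorphism $f_*\omega_{V_1}\simeq \omega_{V_2}$.

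Combining these (and using that simple normal crossing varieties are locally complete intersections, hence Cohen--Macaulay, so their dualizing complexes are concentrated in a single degree), we obtain $Rf_*\omega_{V_1}^{\bullet}\simeq\omega_{V_2}^{\bullet}$. Grothendieck duality then yields
\[
Rf_*\mathcal O_{V_1}\simeq R\mathcal Hom _{V_2}(Rf_*\omega_{V_1}^{\bullet},\omega_{V_2}^{\bullet})\simeq R\mathcal Hom _{V_2}(\omega_{V_2}^{\bullet},\omega_{V_2}^{\bullet})\simeq \mathcal O_{V_2}.
\]
The delicate step is the upgrade from the generic isomorphism to the global identification $f_*\omega_{V_1}\simeq \omega_{V_2}$, where the bijective correspondence of strata from the first step is essential; the remaining steps are largely formal once the SNC vanishing from \cite{fujino-book} and the Grothendieck duality formalism are at hand.
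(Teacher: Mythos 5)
Your overall strategy (vanishing for $\omega_{V_1}$, identification of $f_*\omega_{V_1}$ with $\omega_{V_2}$, then Grothendieck duality to get $Rf_*\mathcal O_{V_1}\simeq \mathcal O_{V_2}$) is the same as the paper's, but your key step has a genuine gap: you claim that torsion-freeness of $f_*\omega_{V_1}$ together with the trace map being a generic isomorphism ``upgrades'' it to a global isomorphism $f_*\omega_{V_1}\simeq \omega_{V_2}$. That argument only gives \emph{injectivity}: the kernel of the trace map is a generically trivial subsheaf of a torsion-free sheaf, hence zero, but nothing rules out a cokernel supported on a proper closed subset. (Compare a resolution $f:Y\to X$ of a simple elliptic surface singularity, where $f_*\omega_Y\subsetneq \omega_X$ even though $f_*\omega_Y$ is torsion-free and generically isomorphic to $\omega_X$.) In the proof of Proposition \ref{31}, which you are mimicking, surjectivity came for free because the composition $\omega_T\to f_*\omega_{T'}\to \omega_T$ was already known to be an isomorphism --- that splitting was obtained by dualizing the previously established quasi-isomorphism $\mathcal O_T\to Rf_*\mathcal O_{T'}\to \mathcal O_T$. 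Here no such section $\omega_{V_2}\to f_*\omega_{V_1}$ is available; producing one is essentially equivalent to the lemma you are trying to prove, so invoking that argument is circular at exactly this point.

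The paper closes this gap by a discrepancy computation that your proposal omits. One writes $K_{V_1}=f^*K_{V_2}+E$ with $E$ $f$-exceptional, passes to the normalizations $\nu_i:V_i^\nu\to V_i$ with conductors $\Theta_i$, and obtains $K_{V_1^\nu}+\Theta_1=(f^\nu)^*(K_{V_2^\nu}+\Theta_2)+\nu_1^*E$. Since $V_2^\nu$ is smooth, $\Theta_2$ is a reduced simple normal crossing divisor, and $K_{V_2^\nu}+\Theta_2$ is Cartier, all exceptional discrepancies of $(V_2^\nu,\Theta_2)$ are integers $\geq -1$; the stratum hypothesis says precisely that $f^\nu$ is an isomorphism over the generic point of every lc center of $(V_2^\nu,\Theta_2)$, ruling out discrepancy $-1$, so $\nu_1^*E\geq 0$ and hence $E\geq 0$. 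This is the real place where your ``bijective correspondence of strata'' must be used --- not through torsion-freeness. With $E$ effective and $f$-exceptional, and $V_2$ semi-normal and $S_2$, one gets $f_*\mathcal O_{V_1}(K_{V_1})\simeq \mathcal O_{V_2}(K_{V_2})$, which combined with $R^if_*\mathcal O_{V_1}(K_{V_1})=0$ for $i>0$ gives $Rf_*\omega_{V_1}\simeq \omega_{V_2}$, and the duality step you describe then finishes the proof exactly as in the paper. So your outline is repairable, but only after inserting the effectivity of $E$; as written, the surjectivity of the trace map is unproven.
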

\begin{proof}
We can write 
$$
K_{V_1}=f^*K_{V_2}+E
$$ 
such that $E$ is $f$-exceptional. 
We consider the following commutative diagram 
$$
\begin{CD}
V_1^{\nu}@>{f^\nu}>> V_2^{\nu}\\
@V{\nu_1}VV @VV{\nu_2}V \\
V_1@>>{f}> V_2
\end{CD}
$$ 
where 
$\nu_1: V_1^\nu\to V_1$ and $\nu_2: V_2^\nu \to V_2$ are the normalizations. 
We can write $K_{V_1^{\nu}}+\Theta_1=\nu_1^*K_{V_1}$ and 
$K_{V_2^{\nu}}+\Theta_2=\nu_2^*K_{V_2}$, where 
$\Theta_1$ and $\Theta_2$ are the {\em{conductor}} divisors. 
By pulling back $K_{V_1}=f^*K_{V_2}+E$ to $V_1^\nu$ by $\nu_1$, 
we have 
$$
K_{V_1^\nu}+\Theta_1=(f^\nu)^*(K_{V_2^\nu}+\Theta_2)+\nu_1^*E. 
$$ 
Note that $V_2^\nu$ is smooth and $\Theta_2$ is a reduced simple normal crossing divisor on $V_2^\nu$. 
By the assumption, $f^\nu$ is an isomorphism 
over the generic point of any lc center of the 
pair $(V_2^\nu, \Theta_2)$. 
Therefore, $\nu_1^*E$ is effective since $K_{V_2^\nu}+\Theta_2$ is Cartier. 
Thus, we obtain that $E$ is effective. 
We can easily check that $f$ has connected fibers by 
the assumptions. 
Since $V_2$ is semi-normal and 
satisfies Serre's $S_2$ condition, 
we have $\mathcal O_{V_2}\simeq f_*\mathcal O_{V_1}$ and $f_*\mathcal O_{V_1}
(K_{V_1})\simeq \mathcal O_{V_2}(K_{V_2})$. 
On the other hand, 
we obtain $R^if_*\mathcal O_{V_1}(K_{V_1})=0$ for 
every $i>0$ by \cite[Lemma 2.33]{fujino-book}. 
Therefore, $Rf_*\mathcal O_{V_1}(K_{V_1})\simeq \mathcal O_{V_2}(K_{V_2})$ in the derived 
category. 
Since $V_1$ and $V_2$ are Gorenstein, 
we have $Rf_*\mathcal O_{V_1}\simeq \mathcal O_{V_2}$ in the 
derived category by the Grothendieck duality (cf.~the proof of 
Proposition \ref{31}). 
\end{proof}

\subsection{Dlt blow-ups}\label{sec23}
\label{mmp-ne} 
Let us recall the notion of {\em{dlt blow-ups}}. 
Theorem \ref{dltblowup} was 
first obtained by Christopher 
Hacon (cf.~\cite[Section 10]{fujino-fundamental}). 
For a simplified proof, see \cite[Section 4]{fujino-ss}. 

\begin{thm}[Dlt blow-up]\label{dltblowup} 
Let $(X, \Delta)$ be a quasi-projective 
lc pair. 
Then we can construct a projective 
birational morphism $f:Y\to X$ such that 
$K_Y+\Delta_Y=f^*(K_X+\Delta)$ with 
the following properties. 
\begin{itemize}
\item[(a)] $(Y, \Delta_Y)$ is a $\mathbb Q$-factorial dlt pair. 
\item[(b)] $a(E, X, \Delta)=-1$ for every $f$-exceptional 
divisor $E$. 
\end{itemize}
When $(X, \Delta)$ is dlt, we can make $f$ small 
and an isomorphism over the generic point of every lc center of $(X, \Delta)$. 
\end{thm}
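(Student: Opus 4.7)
The plan is to extract a $\mathbb Q$-factorial dlt model by taking a log resolution and running a relative minimal model program over $X$ that contracts precisely the exceptional divisors of discrepancy strictly greater than $-1$.

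First, I would take a log resolution $g : Z \to X$ such that $\Exc(g)\cup g^{-1}(\Supp \Delta)$ is a simple normal crossing divisor, and enumerate the $g$-exceptional primes as $E_1,\dots,E_r$ with discrepancies $a_i := a(E_i, X, \Delta) \geq -1$. Setting $\Delta_Z := g^{-1}_*\Delta + \sum_i E_i$ makes $(Z, \Delta_Z)$ log smooth (hence $\mathbb Q$-factorial dlt), and a direct computation of discrepancies yields
$$K_Z + \Delta_Z = g^*(K_X + \Delta) + F, \qquad F = \sum_i (1 + a_i) E_i,$$
with $F \geq 0$ and $g$-exceptional.

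Next, I would run a $(K_Z + \Delta_Z)$-MMP over $X$ with scaling of an auxiliary ample divisor. Because the relative difference $K_Z + \Delta_Z - g^*(K_X + \Delta) = F$ is effective and $g$-exceptional, the negativity lemma forces every $(K_Z+\Delta_Z)$-negative extremal ray over $X$ to meet $\Supp F$, so each divisorial contraction contracts a component of $F$ and each flipping locus lies in $\Supp F$. The $\mathbb Q$-factorial dlt property is preserved at each step, and after termination the output $f : Y \to X$ satisfies $f_* F = 0$, which means exactly that every $E_i$ with $1 + a_i > 0$ has been removed while the $E_i$ with $a_i = -1$ survive as the only $f$-exceptional divisors; this yields (a) and (b).

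The main obstacle is termination of this MMP, since $K_Z+\Delta_Z$ is not big over $X$ and standard termination results do not apply directly. Here I would follow Hacon's argument (written out in \cite[Section 10]{fujino-fundamental} and simplified in \cite[Section 4]{fujino-ss}), invoking BCHM for the existence of the required flips and a scaling-plus-special-termination argument, exploiting that the only non-$g$-trivial part of $K_Z + \Delta_Z$ is the effective exceptional divisor $F$, so that the MMP is essentially an $F$-MMP over $X$ and must stop after finitely many steps. Finally, in the dlt case I would choose $g$ to be an isomorphism over the generic point of every lc center of $(X, \Delta)$, which is possible by the very definition of dlt; then no $g$-exceptional divisor has $a_i = -1$, so $1 + a_i > 0$ for every $i$, the MMP contracts every $g$-exceptional divisor, and $f$ becomes small. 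Since the MMP is an isomorphism over the locus where $F$ vanishes, which contains the generic point of every lc center, $f$ is an isomorphism there as well.
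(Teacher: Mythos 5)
Your proposal is correct and follows essentially the same route as the paper, which offers no proof of its own for this theorem but attributes it to Hacon and points to \cite[Section 10]{fujino-fundamental} and the simplified argument in \cite[Section 4]{fujino-ss}: your construction (log resolution with $\Delta_Z=g^{-1}_*\Delta+\sum_iE_i$, the $(K_Z+\Delta_Z)$-MMP with scaling over $X$, the observation that $K_Z+\Delta_Z\equiv_X F$ confines every step to $\Supp F$ and the negativity lemma kills $F$ at the end, and a Szab\'o-type resolution in the dlt case) is exactly the argument of those references, to which you also correctly defer the genuinely hard point, termination. The only quibbles are cosmetic: the confinement of extremal rays to $\Supp F$ is plain numerical equivalence over $X$ rather than the negativity lemma (which is what you need to conclude that the strict transform of $F$ on $Y$ vanishes), and ``$f_*F=0$'' should read that the pushforward of $F$ under the MMP to $Y$ is zero.
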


Note that 
Theorem \ref{dltblowup} was proved by the minimal model program with 
scaling (cf.~\cite{bchm}). 

As a corollary of Theorem \ref{dltblowup}, 
we obtain the following useful lemma.  

\begin{lem}\label{conj1} 
Let $P\in X$ be an isolated lc singularity  
with index one, where $X$ is quasi-projective. 
Then there exists a projective 
birational morphism $g:Z\to X$ such 
that $K_Z+D=g^*K_X$, $(Z, D)$ is a $\mathbb Q$-factorial 
dlt pair, $g$ is an isomorphism outside $P$, 
and $D$ is a reduced divisor on $Z$.  
\end{lem}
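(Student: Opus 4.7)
The plan is to apply the dlt blow-up theorem (Theorem \ref{dltblowup}) directly to the lc pair $(X,0)$. Because $K_X$ is Cartier by the index-one assumption, $(X,0)$ is a quasi-projective lc pair, so Theorem \ref{dltblowup} produces a projective birational morphism $g:Z\to X$ with $K_Z+\Delta_Z=g^*K_X$ such that (a) $(Z,\Delta_Z)$ is $\mathbb{Q}$-factorial dlt and (b) every $g$-exceptional prime divisor $E$ satisfies $a(E,X,0)=-1$. I will set $D:=\Delta_Z$ and verify that $D$ is reduced and that $g$ is an isomorphism outside $P$.

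To see that $D$ is reduced, note that since the boundary on $X$ is zero, the divisor $\Delta_Z$ has no non-exceptional components; its support is contained in the exceptional locus of $g$. For each $g$-exceptional prime divisor $E$, the coefficient of $E$ in $\Delta_Z$ equals $-a(E,X,0)=1$ by property (b). Hence $\Delta_Z$ is the reduced sum of the $g$-exceptional prime divisors.

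To see that $g$ is an isomorphism outside $P$, I use that $P\in X$ is an \emph{isolated} singularity, so $X\setminus\{P\}$ is smooth (compare the introduction); thus $\{P\}$ is the unique non-klt point, and hence the unique lc center, of $(X,0)$. By property (b), each $g$-exceptional prime divisor is an lc place of $(X,0)$ and must have center equal to $\{P\}$. Consequently $g^{-1}(X\setminus\{P\})\to X\setminus\{P\}$ is a projective birational morphism between normal varieties admitting no exceptional divisor, with $\mathbb{Q}$-factorial (in fact smooth) target, and therefore an isomorphism, since over a $\mathbb{Q}$-factorial target the exceptional locus of a projective birational morphism is pure of codimension one.

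There is essentially no obstacle here, as the lemma is a direct corollary of Theorem \ref{dltblowup}: the index-one hypothesis makes $K_X$ Cartier so that $g^*K_X$ is well defined and forces the exceptional coefficients to be exactly $1$, while the isolatedness hypothesis pins the centers of the exceptional divisors to $P$. The only mildly subtle step is the last one, upgrading "no exceptional divisor lies over $X\setminus\{P\}$" to "$g$ is an isomorphism over $X\setminus\{P\}$," for which the smoothness of $X$ away from $P$ is exactly what is needed.
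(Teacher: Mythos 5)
Your proof is correct and is exactly the paper's intended argument: the paper states Lemma \ref{conj1} as a direct corollary of Theorem \ref{dltblowup} without further proof, and your write-up supplies precisely those details (coefficient $-a(E,X,0)=1$ from property (b) gives reducedness of $D$, and smoothness of $X\setminus\{P\}$ pins the exceptional divisors over $P$ and upgrades the resulting small morphism over $X\setminus\{P\}$ to an isomorphism via $\mathbb{Q}$-factoriality). No gaps; the final step you flag as subtle is handled correctly.
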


\begin{rem}
If $P\in X$ is $\mathbb Q$-factorial, then $f^{-1}(P)$ is 
a divisor. 
So, we have $\Supp D=f^{-1}(P)$. 
In general, we have only 
$\Supp D\subset f^{-1}(P)$. 
\end{rem}

For non-degenerate isolated hypersurface log canonical 
singularities, 
we can use the toric geometry to construct dlt blow-ups 
as in Lemma \ref{conj1} (see \cite[Section 6]{fs}). 

\section{Dlt pairs with torsion log canonical divisor}\label{sec3}

This section is a supplement to \cite[Section 2]{fujino} and 
\cite[Section 2]{fujino2}. 
We introduce a new invariant for dlt pairs with torsion 
log canonical divisor. 

\begin{defn}\label{new} 
Let $(X, D)$ be a projective dlt pair such that 
$K_X+D\sim_{\mathbb Q} 0$. We put 
$$\widetilde\mu=\widetilde \mu (X, D)=\min \{\,\dim W \,| \, 
W \ {\text{is an lc center of}} \ (X, D)\}. 
$$ 
It is related to the invariant $\mu$, which is defined 
in \cite{fujino2} and will play important roles in the 
subsequent sections. See \ref{4111} below.  
\end{defn}

\begin{rem}
By \cite[Theorem 1]{ckp} or \cite[Theorem 1.2]{gongyo-zero}, 
$K_X+D\equiv 0$ if and only if $K_X+D\sim _{\mathbb Q}0$.  
\end{rem}

As we pointed out in \cite{fg}, \cite[Section 2]{fujino} works 
in any dimension by using the minimal model program with scaling 
(cf.~\cite{bchm}). 
Therefore, we obtain the following proposition (cf.~\cite[Proposition 2.4]{fujino2}). 

\begin{prop}\label{216} 
Let $(X, D)$ be a projective dlt pair 
such that 
$K_X+D\sim_{\mathbb Q} 0$. Let $W$ be any minimal lc center 
of $(X, D)$. 
Then $\dim W=\widetilde \mu(X, D)$. Moreover, all the 
minimal lc centers of $(X, D)$ are birational 
each other and $\llcorner D\lrcorner$ has at most two connected components. 
\end{prop}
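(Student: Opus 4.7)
The plan is to extend the dimension-$\leq 3$ argument of \cite[Section 2]{fujino} to arbitrary dimension via the minimal model program with scaling of \cite{bchm}. First, by Theorem \ref{dltblowup}, I replace $(X,D)$ by a small $\mathbb Q$-factorial dlt modification $f\colon (Y,D_Y)\to (X,D)$ with $f$ an isomorphism over the generic points of all lc centers. Since $f$ is small, the divisor $\lfloor D_Y\rfloor$ is just the strict transform of $\lfloor D\rfloor$, so the minimal lc center dimensions, their birational types, and the connected-component count of $\lfloor D\rfloor$ all agree upstairs and downstairs. I may therefore assume $(X,D)$ itself is $\mathbb Q$-factorial dlt.

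Next, I fix an irreducible component $T$ of $\lfloor D\rfloor$ and run the $(K_X+D-\varepsilon T)$-MMP with scaling of an ample divisor, for $\varepsilon>0$ sufficiently small. Since $K_X+D\sim_{\mathbb Q}0$, this is effectively a $(-T)$-MMP, and by \cite{bchm} it terminates either with $-T$ nef or at a Mori fiber space $\pi\colon X'\to Z$ with $-T$ $\pi$-ample. At every step the pair remains $\mathbb Q$-factorial dlt with torsion log canonical divisor; a divisorial contraction only contracts irreducible components of $\lfloor D\rfloor$ meeting the extremal ray negatively, and a flip modifies things only in codimension $\geq 2$. Hence the dimensions and birational classes of minimal lc centers are preserved, and the connected-component count of $\lfloor D\rfloor$ is non-increasing. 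If the MMP ends with $-T$ nef, then since $-T\cdot C\leq 0$ for every curve $C\subset T$ (assuming $T$ survives), the divisor $T$ must have been contracted along the way; I then restart with a new component of $\lfloor D\rfloor$, and after finitely many iterations I reach a Mori fiber space.

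At the Mori fiber space $\pi\colon X'\to Z$, let $D'$ be the transform of $D$. Restriction to a general fiber $F$ via adjunction gives a projective dlt pair $(F,D_F)$ with $K_F+D_F\sim_{\mathbb Q}0$ of dimension $<\dim X$, and the components of $\lfloor D'\rfloor$ meeting $F$ (in particular $T$) restrict to give $\lfloor D_F\rfloor$. By induction on $\dim X$, the three conclusions hold for $(F,D_F)$: all minimal lc centers of $(F,D_F)$ are birational of dimension $\widetilde\mu(F,D_F)$, and $\lfloor D_F\rfloor$ has at most two connected components. These lift to $X'$ because the horizontal components of $\lfloor D'\rfloor$ are connected along the fibration and any vertical minimal lc center of $(X',D')$ must lie in a fiber whose geometry is controlled by the fiberwise statement; tracing back through the preserved invariants, the three claims follow for the original $(X,D)$.

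\textbf{Main obstacle.} The hard part is the bookkeeping along the MMP: verifying that each flip and each divisorial contraction preserves the birational equivalence among minimal lc centers and does not increase the number of connected components of $\lfloor D\rfloor$. The other delicate point is the Mori fiber space step, where one must argue carefully that minimal lc centers either dominate $Z$ (restricting nontrivially to $F$) or sit inside a fiber in a way compatible with the inductive hypothesis, so that the induction on dimension closes cleanly and the ``at most two connected components'' bound propagates from $\lfloor D_F\rfloor$ up to $\lfloor D'\rfloor$ through the horizontal structure of $\pi$.
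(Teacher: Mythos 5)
Your strategy coincides with the paper's: the printed proof is itself only a sketch that reduces to the $\mathbb Q$-factorial case by Theorem \ref{dltblowup} and then invokes induction on dimension together with \cite[Proposition 2.1]{fujino}, whose proof is exactly your scheme (run an MMP to a Mori fiber space, restrict to a general fiber by adjunction, induct), made dimension-free by the MMP with scaling of \cite{bchm}, as observed in \cite{fg}. So the comparison reduces to whether your sketch of that mechanism is sound, and there is one concrete misstep: the termination analysis. In the $(K_X+D-\varepsilon T)$-MMP every contracted extremal ray $R$ satisfies $(K_X+D-\varepsilon T)\cdot R<0$, hence $T\cdot R>0$ because $K_X+D\sim_{\mathbb Q}0$; a divisor $G$ contracted by a divisorial contraction satisfies $G\cdot R<0$, so $G\ne T$ and the component $T$ can \emph{never} be contracted (also, $G$ need not be a component of $\llcorner D\lrcorner$ at all, contrary to your parenthetical claim). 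Consequently $-\varepsilon T$ remains non-pseudo-effective at every stage---an effective nonzero $\mathbb Q$-Cartier divisor on a projective variety is never numerically trivial---so by \cite{bchm} the MMP with scaling must end at a Mori fiber space on which the strict transform of $T$ is relatively ample. Your ``$-T$ nef'' branch cannot occur, and your proposed remedy for it (``$T$ must have been contracted along the way; restart with a new component'') rests on a deduction that is false, since $T$ always survives; delete the restart loop and this half of the argument becomes correct.

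The second, more substantial point is that what you call bookkeeping is the actual mathematical content, and it is not automatic. The reason the invariants survive the MMP steps is that $(K_X+D)\cdot R=0$ for every contracted ray, so each step is crepant for $K_X+D$ and is an isomorphism at the generic point of every lc center (the paper records precisely this in Remark \ref{rem48}); moreover ``birational each other'' must be upgraded to $B$-birationality, that is, the existence of a common resolution $\alpha,\beta$ with $\alpha^*(K_X+D)=\beta^*(K_{X'}+D')$ (\cite[Definition 1.5]{fujino}), since that is what makes the identification of minimal lc centers compatible with adjunction and with the inductive step on the Mori fiber space. The descent from the general fiber---including how vertical strata are handled and how the ``at most two connected components'' bound for $\llcorner D\lrcorner$ propagates via the connectedness lemma \cite[Theorem 5.48]{km}---is carried out in the Claims inside the proof of \cite[Lemma 4.9]{fujino}, which the paper deliberately cites rather than reproves. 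Your proposal asserts these preservation statements without proof, so as written it is a faithful outline of the paper's route with a repairable error in the termination step, not yet a complete proof: either cite \cite[Proposition 2.1]{fujino} and the Claims of \cite[Lemma 4.9]{fujino} as the paper does, or supply the crepancy and connectedness arguments explicitly.
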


\begin{proof}[Sketch of the proof] 
By Theorem \ref{dltblowup}, we may assume that 
$X$ is $\mathbb Q$-factorial. The induction on dimension 
and \cite[Proposition 2.1]{fujino} implies 
the desired properties. 
More precisely, all the minimal lc centers are $B$-birational 
each other (cf.~\cite[Definition 1.5]{fujino}). 
Note that Proof of Claims in the proof of \cite[Lemma 4.9]{fujino} 
may help us understand this proposition.  
\end{proof}

The next lemma is new. We will use it in Section \ref{sec2}. 

\begin{lem}\label{13}
Let $(X, D)$ be an $n$-dimensional 
projective dlt pair such 
that $K_X+D\sim _{\mathbb Q}0$. Assume that $\llcorner 
D\lrcorner \ne 0$. 
Then there exists an irreducible component $D_0$ of $\llcorner D\lrcorner$ 
such that 
$h^i(X, \mathcal O_X)\leq h^i(D_0, \mathcal O_{D_0})$ 
for every $i$. 
\end{lem}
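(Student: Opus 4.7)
Write $\lfloor D\rfloor =S=S_1+\cdots +S_k$ and $B=\{D\}$, and fix a candidate component $D_0=S_j$. The starting point is the short exact sequence
\[
0\to \mathcal O_X(-D_0)\to \mathcal O_X \to \mathcal O_{D_0}\to 0.
\]
Taking its long exact sequence of cohomology, the inequality $h^i(X,\mathcal O_X)\leq h^i(D_0,\mathcal O_{D_0})$ for every $i$ is equivalent to the injectivity of the restriction $H^i(X,\mathcal O_X)\to H^i(D_0,\mathcal O_{D_0})$ for every $i$, equivalently to the vanishing of the boundary map $H^i(X,\mathcal O_X(-D_0))\to H^i(X,\mathcal O_X)$ for every $i$. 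So the entire task is to locate a component $D_0$ of $\lfloor D\rfloor$ for which this boundary vanishes.

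Using $K_X+D\sim_{\mathbb Q}0$, we rewrite $-D_0\sim_{\mathbb Q}K_X+(S-D_0)+B$, with $(X,(S-D_0)+B)$ still dlt. By Theorem \ref{dltblowup} we may first pass to a $\mathbb Q$-factorial dlt blow-up, so every $S_j$ is $\mathbb Q$-Cartier and the minimal model program with scaling is available. To find $D_0$, I would run a $(K_X+D-\delta S)$-MMP for a small rational $\delta >0$; since $K_X+D\sim_{\mathbb Q}0$, this is an anti-$S$-MMP, which terminates by \cite{bchm} with a Mori fiber space $\pi\colon X'\to Y'$ where the strict transform $S'$ of $S$ is $\pi$-ample. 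Choose $D_0'$ to be an irreducible component of $S'$ that dominates $Y'$, and let $D_0=S_j$ be the component of $\lfloor D\rfloor$ whose strict transform is $D_0'$. Proposition \ref{216} controls the combinatorics of $\lfloor D\rfloor$ through the MMP steps, so that this choice of $D_0$ is well-defined.

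On $X'$, the pair $(X',B')$ is klt and $-D_0'-(K_{X'}+B')= S'-D_0'$ can be arranged, by a careful choice of $D_0'$ within the $\pi$-ample $S'$, to be $\pi$-nef and $\pi$-big. Relative Kawamata--Viehweg vanishing then gives $R^i\pi_*\mathcal O_{X'}(-D_0')=0$ for $i>0$, and combined with vanishing on the base $Y'$ (which has lower dimension and can be handled inductively or by ampleness considerations) a Leray spectral sequence argument yields $H^i(X',\mathcal O_{X'}(-D_0'))\to H^i(X',\mathcal O_{X'})$ is zero for every $i$. The Reid--Fukuda type vanishing of \cite[Lemma 4.10]{fujino-book} together with Lemma \ref{lemA} allow one to transfer this conclusion from $X'$ back to $X$ through the MMP steps and the dlt blow-up, since each such step is crepant with respect to $K+D$ and preserves the cohomology of $\mathcal O$ on both the ambient space and on the relevant dlt strata.

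\textbf{Main obstacle.} The crux of the argument is not the short exact sequence manipulation but the selection of $D_0$: one needs a single component that works for \emph{every} $i$ simultaneously, whereas a priori different components might be required for different $i$. The $\pi$-ampleness of $S'$ at the end of the MMP, together with the $B$-birationality and connectedness control from Proposition \ref{216}, is what makes a uniform choice possible. A secondary obstacle is bookkeeping the transfer of cohomology through flips, which is handled by the crepantness of each step and the vanishing theorems for dlt pairs recorded in Section \ref{sec22}.
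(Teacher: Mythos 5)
Your first half coincides with the paper's proof: pass to a $\mathbb Q$-factorial dlt model via Theorem \ref{dltblowup}, run the $(K_X+D-\varepsilon\llcorner D\lrcorner)$-MMP with scaling (an anti-$\llcorner D\lrcorner$-MMP since $K_X+D\sim_{\mathbb Q}0$), terminate at a Mori fiber space $\pi\colon X'\to Y'$ (the paper's $\varphi\colon X_k\to Z$), and choose $D_0$ so that its strict transform $D'_0$ dominates the base. The divergence, and the gap, is in how you extract the inequality. You reduce to the vanishing of the boundary maps $H^i(X,\mathcal O_X(-D_0))\to H^i(X,\mathcal O_X)$ (incidentally, this is sufficient for the inequality but not equivalent to it) and want to get it from relative Kawamata--Viehweg vanishing on $X'$. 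The hypothesis you need, that $-D'_0-(K_{X'}+B')\sim_{\mathbb Q}S'-D'_0$ be $\pi$-nef and $\pi$-big, cannot ``be arranged by a careful choice of $D'_0$'': when $\llcorner D\lrcorner$ is irreducible --- a case the lemma must cover --- one has $S'-D'_0=0$, which is never $\pi$-big; more generally $D'_0$ may be the only component dominating $Y'$, and since $\rho(X'/Y')=1$ the vertical components are $\pi$-numerically trivial, so $S'-D'_0$ is $\pi$-nef but in general not $\pi$-big. Note also that $S'-D'_0$ is reduced, so the relevant pair $(X',B'+S'-D'_0)$ is dlt rather than klt, forcing you into Reid--Fukuda-type statements with their lc-center hypotheses, which $D'_0$ may violate (strata of $S'-D'_0$ can lie inside $D'_0$). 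Finally, even granting $R^i\pi_*\mathcal O_{X'}(-D'_0)=0$ for $i>0$, you would still have to show that $H^i(Y',\pi_*\mathcal O_{X'}(-D'_0))\to H^i(Y',\mathcal O_{Y'})$ vanishes, where $\pi_*\mathcal O_{X'}(-D'_0)$ is merely some ideal sheaf on $Y'$; the appeal to ``Leray plus induction on the base'' does not produce this.

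The paper's proof needs none of this machinery and works only with dimensions of cohomology groups: $h^i(X,\mathcal O_X)=h^i(X_k,\mathcal O_{X_k})$ because both have rational singularities; $h^i(X_k,\mathcal O_{X_k})=h^i(Z,\mathcal O_Z)$ because $R^i\varphi_*\mathcal O_{X_k}=0$ for the extremal Fano contraction; and --- this is the key idea your proposal is missing --- $h^i(Z,\mathcal O_Z)\leq h^i(D_0,\mathcal O_{D_0})$ because $D'_0\to Z$ is \emph{surjective} and $D_0$ and $Z$ have only rational singularities (cf.~\cite[Corollary 1.5]{fujino-application}), so the pullback on cohomology is injective and compatible with the Hodge pieces (\cite[Theorem 2.29]{ps}). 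In other words, the positivity of $D'_0$ over the base is exploited only through the surjectivity of $D'_0\to Z$ and Hodge theory, not through any vanishing for $\mathcal O_{X'}(-D'_0)$. If you replace your vanishing step by this injectivity-of-pullback argument, the rest of your setup (the MMP, the choice of $D_0$, and the transfer of $h^i$ through the birational steps via rational singularities) is sound and recovers the paper's proof.
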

\begin{proof}
By using the dlt blow-up (cf.~Theorem \ref{dltblowup}), 
we can construct a small projective 
$\mathbb Q$-factorialization of $X$.  
So, by replacing $X$  with its $\mathbb Q$-factorialization, 
we may assume that 
$X$  is $\mathbb Q$-factorial. 
By the assumption, $K_X+D-\varepsilon\llcorner D\lrcorner$ 
is not pseudo-effective for $0<\varepsilon \ll 1$. 
Let $H$ be an effective ample $\mathbb Q$-divisor on $X$ such that 
$K_X+D-\varepsilon\llcorner D\lrcorner+ H$ is nef and 
klt. 
Apply the minimal model program on $K_X+D-\varepsilon \llcorner D\lrcorner$ 
with scaling of $H$. 
Then we obtain a sequence 
of divisorial contractions and flips: 
$$
X=X_0\dashrightarrow X_1\dashrightarrow \cdots \dashrightarrow 
X_k,  
$$ 
and an extremal Fano contraction $\varphi: X_k\to Z$ (cf.~\cite[Section 2]{fujino-ss}). 
By the construction, there is an irreducible 
component $D_0$ of $\llcorner D\lrcorner$ such that the strict transform $D'_0$ of $D_0$ 
on $X_k$ dominates $Z$. 
Since $X$ and $X_k$ have only rational singularities, 
we have $h^i(X, \mathcal O_X)=h^i(X_k, \mathcal O_{X_k})$ for every $i$. 
Since $R^i\varphi_*\mathcal O_{X_k}=0$ for 
every $i>0$, 
we have $h^i(X_k, \mathcal O_{X_k})=h^i(Z, \mathcal O_Z)$ for 
every $i$. 
Since $D_0$ and $Z$ have only rational 
singularities (cf.~\cite[Corollary 1.5]{fujino-application}), 
$h^i(Z, \mathcal O_Z)\leq h^i(D_0, \mathcal O_{D_0})$ for 
every $i$ (see, for example, \cite[Theorem 2.29]{ps}). Therefore, we have the desired 
inequality $h^i(X, \mathcal O_X)\leq h^i(D_0, \mathcal O_{D_0})$ 
for every $i$. 
\end{proof}

\begin{ex}
Let $X=\mathbb P^2$ and let $D$ be an elliptic 
curve on $X=\mathbb P^2$. 
Then $(X, D)$ is a projective dlt pair such that $K_X+D\sim 0$. 
In this case, $h^1(X, \mathcal O_X)=0<h^1(D, \mathcal O_D)=1$. 
\end{ex}

By combining the above results, 
we obtain the next proposition. 

\begin{prop}\label{15}
Let $(X, D)$ be a projective dlt pair such that 
$K_X+D\sim _{\mathbb Q}0$. 
We assume that $\widetilde \mu (X, D)=0$. 
Then $h^i(X, \mathcal O_X)=0$ for 
every $i>0$. Moreover, $X$ is rationally connected. 
\end{prop}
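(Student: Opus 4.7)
The plan is to proceed by induction on $n = \dim X$, with both statements handled in parallel and Lemma \ref{13} providing the main step. For the base case $n = 1$, the hypothesis $\widetilde{\mu}(X,D) = 0$ forces $\llcorner D\lrcorner$ to contain a closed point; together with $K_X + D \sim_{\mathbb{Q}} 0$ and effectiveness of $D$, this rules out curves of positive genus, leaving $X = \mathbb{P}^1$, for which both conclusions are trivial. In the inductive step, I would first replace $X$ by a small $\mathbb{Q}$-factorialization via Theorem \ref{dltblowup}, since this preserves all the hypotheses and changes neither the cohomology of $\mathcal{O}_X$ nor the rational connectedness of $X$.

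Next, applying Lemma \ref{13} yields an irreducible component $D_0$ of $\llcorner D\lrcorner$ with $h^i(X, \mathcal{O}_X) \leq h^i(D_0, \mathcal{O}_{D_0})$ for every $i$. Because $(X,D)$ is dlt, $D_0$ is normal, and adjunction produces a projective dlt pair $(D_0, D_{D_0})$ with $K_{D_0} + D_{D_0} \sim_{\mathbb{Q}} 0$, where $D_{D_0}$ is the different. Since $D_0$ is itself an lc center of $(X,D)$ it contains some minimal lc center of $(X,D)$, and by Proposition \ref{216} combined with $\widetilde{\mu}(X,D) = 0$ every minimal lc center of $(X,D)$ is a closed point. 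Such a point lying in $D_0$ is a $0$-dimensional lc center of $(D_0, D_{D_0})$, so $\widetilde{\mu}(D_0, D_{D_0}) = 0$; the inductive hypothesis then forces $h^i(D_0, \mathcal{O}_{D_0}) = 0$ for $i > 0$, from which the desired vanishing on $X$ follows immediately.

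For rational connectedness, I would revisit the MMP from the proof of Lemma \ref{13}: the $(K_X + D - \varepsilon \llcorner D\lrcorner)$-MMP with scaling gives a birational map $X \dashrightarrow X_k$ between $\mathbb{Q}$-factorial dlt varieties (hence with rational singularities) together with an extremal Fano contraction $\varphi : X_k \to Z$ whose target is dominated by the strict transform $D_0'$ of a well-chosen irreducible component $D_0$ of $\llcorner D\lrcorner$. The inductive hypothesis gives that $D_0$ is rationally connected; birational invariance (valid thanks to rational singularities) makes $D_0'$ rationally connected, whence $Z$ is rationally connected as well. The general fibers of $\varphi$ are log Fano, hence rationally connected by Hacon--McKernan / Zhang, and Graber--Harris--Starr then implies that $X_k$, and therefore $X$, is rationally connected.

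The main obstacle I anticipate is the bookkeeping in the induction: one must verify carefully that the chosen $D_0$ really is a component to which the inductive hypothesis applies (dlt adjunction with $\widetilde{\mu}(D_0, D_{D_0}) = 0$) and that the intermediate varieties appearing in the MMP have singularities mild enough to transport rational connectedness along the birational map. Once these points are cleanly set up, Lemma \ref{13} and Proposition \ref{216} carry the proof through essentially mechanically.
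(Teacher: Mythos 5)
Your proposal is correct and follows essentially the same route as the paper's proof: induction on dimension, with adjunction to an irreducible component $D_0$ of $\llcorner D\lrcorner$ (noting via Proposition \ref{216} that $\widetilde\mu(D_0,\cdot)=0$) and Lemma \ref{13} giving the vanishing, then revisiting the MMP from Lemma \ref{13} and using Zhang/Hacon--McKernan for the fibers of $\varphi\colon X_k\to Z$ and Graber--Harris--Starr for $X_k$. The only minor imprecision is your appeal to ``rational singularities'' to transport rational connectedness along the birational maps; the paper instead uses that $Z$ is log terminal (\cite[Corollary 4.5]{fujino-application}) and invokes \cite[Corollary 1.5]{hm}, which is the correct tool since rational connectedness (as opposed to rational chain connectedness) is not a birational invariant of arbitrary singular proper varieties without such an input.
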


\begin{proof}
If $\dim X=1$, then the statement is trivial since $X\simeq \mathbb P^1$. 
From now on, we assume that $\dim X\geq 2$. 
Since $\widetilde \mu(X, D)=0$, we obtain that $(X, D)$ is not klt. 
Thus we know $\llcorner D\lrcorner\ne 0$. 
Let $D_0$ be any irreducible component of $\llcorner 
D\lrcorner$. By adjunction, we obtain 
$(K_X+D)|_{D_0}=K_{D_0}+B$ such that 
$(D_0, B)$ is dlt, $K_{D_0}+B\sim_{\mathbb Q}0$, and $\widetilde \mu(D_0, B)=0$ 
by Proposition 
\ref{216}. 
By the induction on dimension, we know that 
every irreducible component $D_0$ of $\llcorner D\lrcorner$ is rationally connected and 
$h^i(D_0, \mathcal O_{D_0})=0$ for every $i>0$. 
Thus, by Lemma \ref{13}, we have that $h^i(X, \mathcal O_X)=0$ for 
every $i>0$. 
In the proof of Lemma 
\ref{13}, $Z$ has only log terminal singularities 
by \cite[Corollary 4.5]{fujino-application}. Since 
$D_0$ is rationally connected, 
so is $Z$ by \cite[Corollary 1.5]{hm}. 
On the other hand, the general fiber of $\varphi:X_k\to Z$ is rationally 
connected (cf.~\cite[Theorem 1]{zhang} and \cite[Corollaries 1.3 and 1.5]{hm}). 
By \cite[Corollary 1.3]{ghs}, $X_k$ is 
rationally connected. 
Thus, $X$ is rationally connected by \cite[Corollary 1.5]{hm}.   
\end{proof}

By Proposition \ref{15}, we obtain a corollary:~Corollary \ref{3777}. 

\begin{cor}\label{3777} 
Let $(X, D)$ be a projective dlt pair 
such that 
$K_X+D\sim _{\mathbb Q} 0$. 
Let $f:Y\to X$ be any resolution such that $K_Y+D_Y=f^*(K_X+D)$ and 
that 
$\Supp D_Y$ is a simple normal crossing divisor on $Y$. 
Assume that $\widetilde \mu (X, D)=0$. 
Then every stratum of $D^{=1}_Y$ is rationally connected. 
Moreover, $h^i(W, \mathcal O_W)=0$ for every $i>0$ where 
$W$ is a stratum of $D^{=1}_Y$. 
\end{cor}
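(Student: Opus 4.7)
The approach is to deduce Corollary~\ref{3777} from Proposition~\ref{15} by induction on $n=\dim X$, applying adjunction to iteratively pass to lc-center subpairs. First, since $Y$ is smooth and $D_Y^{=1}$ is a reduced simple normal crossing divisor, every stratum $W$ is smooth and projective, being a transversal intersection of smooth divisors. For a smooth projective rationally connected variety one has $H^0(W,\Omega_W^i)=0$ for every $i>0$, whence $h^i(W,\mathcal O_W)=0$ for $i>0$ by Hodge symmetry. So the cohomology statement reduces to rational connectedness of the strata.

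I induct on $n$, the case $n=1$ being trivial. For the inductive step, by Theorem~\ref{dltblowup} I may assume $X$ is $\mathbb Q$-factorial after a small dlt modification. Pick any irreducible component $D_0$ of $\llcorner D\lrcorner$; adjunction yields a projective dlt pair $(D_0,B_0)$ with $K_{D_0}+B_0\sim_{\mathbb Q}0$. Because $\widetilde\mu(X,D)=0$, minimal lc centers of $(X,D)$ are $0$-dimensional, and since every lc center of $(X,D)$ contains a minimal one, $D_0$ carries a $0$-dimensional lc center of $(X,D)$, which remains a $0$-dimensional lc center of $(D_0,B_0)$. Hence $\widetilde\mu(D_0,B_0)=0$, and Proposition~\ref{15} applied to $(D_0,B_0)$ shows $D_0$ is rationally connected. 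After refining $f$ so that it induces a log resolution of $(D_0,B_0)$ on the strict transform of $D_0$, every stratum $W$ of $D_Y^{=1}$ contained in the preimage of $D_0$ is realized as a stratum of the induced simple normal crossing divisor on that log resolution; the inductive hypothesis applied to the $(n-1)$-dimensional pair $(D_0,B_0)$ then gives rational connectedness of $W$. Strata arising from $f$-exceptional components of $D_Y^{=1}$ whose images are not divisors in $X$ are handled by a preliminary dlt blow-up (Theorem~\ref{dltblowup}), which makes those divisors into irreducible components of the new $\llcorner D\lrcorner$, reducing to the previous case.

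The main obstacle is the combinatorial bookkeeping: one must ensure that every stratum of $D_Y^{=1}$ on the given resolution $Y$ can be realized, possibly after further blow-ups or dlt modifications, as a stratum of a log resolution of some adjunction pair $(D_0,B_0)$. This relies on the flexibility provided by the dlt blow-ups of Section~\ref{sec23} together with the structural information in Proposition~\ref{216}. Once these identifications are in place, the induction closes with Proposition~\ref{15} as the base input.
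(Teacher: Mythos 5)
Your reduction of the $h^i$-vanishing to rational connectedness (via $H^0(W,\Omega^i_W)=0$ for smooth projective rationally connected $W$ and Hodge symmetry) is fine, and your verification that $\widetilde\mu(D_0,B_0)=0$ for every irreducible component $D_0$ of $\llcorner D\lrcorner$ is exactly the argument inside the proof of Proposition \ref{15} (via Proposition \ref{216}). Note also that for a stratum $W$ lying on the strict transform $D'_0$ of some component $D_0$, no ``refining of $f$'' is needed at all: $f|_{D'_0}:D'_0\to D_0$ is already a resolution satisfying the hypotheses of the corollary for the adjunction pair $(D_0,B_0)$, with $W$ a stratum of the crepant boundary, so the induction applies directly. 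The genuine gap is in the case you defer to ``combinatorial bookkeeping'': strata contained only in $f$-exceptional components of $D^{=1}_Y$. Here your appeal to Theorem \ref{dltblowup} does not deliver what you claim. That theorem produces \emph{some} dlt modification all of whose exceptional divisors have discrepancy $-1$; it gives no control over \emph{which} such divisors are extracted and no relation between the given resolution $Y$ and the dlt model, so the assertion that it ``makes those divisors into irreducible components of the new $\llcorner D\lrcorner$'' is unsupported as stated. What is actually needed, and what the paper uses, is the controlled dlt blow-up obtained by running the minimal model program over $X$ with scaling \emph{starting from the given $Y$} (cf.\ Remark \ref{rem48} and \cite[Section 6]{fujino-ss}): its steps are isomorphisms at the generic points of the strata of $D^{=1}_Y$, which is precisely what ties $Y$ to the dlt model.

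Even granting such a controlled dlt blow-up, your scheme must still realize an arbitrary stratum $W$ of codimension $\geq 2$ contained only in exceptional divisors as a stratum of a log resolution of some adjunction pair, and this is where the paper's proof inserts the one idea your proposal is missing: blow up $Y$ along $W$ itself. Since $W$ is a stratum of $D^{=1}_Y$, the exceptional divisor $E_W$ of the blow-up has discrepancy $-1$, hence is an irreducible component of the new $D^{=1}$; and since $E_W\to W$ is a projective-space bundle, $W$ is rationally connected with $h^i(W,\mathcal O_W)=0$ for $i>0$ if and only if the same holds for $E_W$. This reduces every stratum to the divisorial case in one stroke and dissolves the bookkeeping entirely. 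The divisorial case is then settled by the controlled dlt blow-up $f':Y'\to X$, which makes the strict transform $W'$ a component of $\llcorner D_{Y'}\lrcorner$, together with Proposition \ref{15} applied to $(Y',D_{Y'})$ and birational invariance; note that one must also know $\widetilde\mu(Y',D_{Y'})=0$, i.e., the crepant-birational invariance of $\widetilde\mu$ (cf.\ \ref{4111}), a point absent from your argument since you only checked $\widetilde\mu$ for adjunction pairs on components of the original $\llcorner D\lrcorner$. Without the blow-up trick and the controlled extraction, the closing step of your induction is not established.
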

\begin{proof}
Let $W$ be a stratum of $D^{=1}_Y$. 
Let $\pi:Y'\to Y$ be a blow-up 
at $W$ and let $E_W$ be the exceptional 
divisor of $\pi$. 
Then it is sufficient to 
prove that $E_W$ is rationally connected and $h^i(E_W, \mathcal O_{E_W})=0$ for 
every $i>0$. Therefore, by replacing $Y$ with $Y'$, we may assume that 
$W$ is an irreducible component of $D^{=1}_Y$. 
We can construct a dlt blow-up 
$f': Y'\to X$ such that 
$K_{Y'}+D_{Y'}=f'^*(K_X+D)$ and 
that $f'^{-1}\circ f:Y\dashrightarrow Y'$ is an isomorphism at the 
generic point of $W$ (cf.~\cite[Section 6]{fujino-ss}). 
Since $K_{Y'}+D_{Y'}\sim _{\mathbb Q}0$ and we 
can easily check that $\widetilde \mu(Y', D_{Y'})=0$ 
(cf.~\cite[Claim $(A_n)$]{fujino}), 
we see that 
$W'$, the strict transform of $W$, is rationally connected and $h^i(W', \mathcal O_{W'})=0$ 
for every $i>0$ by Proposition \ref{15}. Thus, $W$ is rationally 
connected (cf.~\cite[Corollary 1.5]{hm}) and $h^i(W, \mathcal O_W)=0$ for 
every $i>0$. 
\end{proof}

\section{Isolated log canonical singularities with index one}\label{sec2} 
In this section, we consider when 
an isolated log canonical singularity 
with index one is Cohen--Macaulay or not. 

\begin{say}\label{211}  
Let $P\in X$ be an $n$-dimensional isolated 
lc singularity with index one. 
By the algebraization theorem (cf.~\cite{hironaka-rossi}, 
\cite[Corollary 1.6]{artin1}, and \cite[Theorem 3.8]{artin2}), 
we always assume that 
$X$ is an algebraic variety in this paper 
(see also \cite[Theorems 3.2.3 and 3.2.4]{ishii-book}). 
Assume that $P\in X$ is not lt. 
We consider a resolution 
$f:Y\to X$ 
such that (i) $f$ is an isomorphism outside 
$P\in X$, 
and (ii) $f^{-1}(P)$ is a simple normal crossing 
divisor on $Y$. 
In this setting, we can 
write $$K_Y=f^*K_X+F-E,$$ 
where 
$F$ and $E$ are both effective Cartier 
divisors without common irreducible 
components. In particular, $E$ is a reduced simple normal 
crossing 
divisor on $Y$. 
\end{say}

\begin{lem}\label{42}  
The cohomology group 
$H^i(E, \mathcal O_E)$ is independent of $f$ for every $i$. 
\end{lem}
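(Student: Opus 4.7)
The plan is to compare $H^j(E, \mathcal O_E)$ for two different resolutions by dominating them with a common log resolution and then applying Corollary \ref{35} to the log smooth pair $(Y_i, E_i)$. Given $f_1$ and $f_2$ as in \ref{211}, I would construct a third resolution $f_3 : Y_3 \to X$ together with projective birational morphisms $h_i : Y_3 \to Y_i$ satisfying $f_i \circ h_i = f_3$, and I would arrange that each $h_i$ is an isomorphism over the generic point of every stratum of $E_i$. By symmetry it then suffices to prove $H^j(E_3, \mathcal O_{E_3}) \simeq H^j(E_i, \mathcal O_{E_i})$ for each $i$ and every $j$.

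Next I would show that $E_3$ coincides with the strict transform $(h_i)^{-1}_* E_i$. For any $h_i$-exceptional prime divisor $D$ on $Y_3$, the arrangement in Step 1 forces the center of $D$ on $Y_i$ to be either properly contained in some stratum of $E_i$ or to be disjoint from $E_i$. Since $(Y_i, E_i)$ is log smooth with reduced snc boundary, the lc places of $(Y_i, E_i)$ are precisely those divisors whose center is a stratum of $E_i$, so $a(D, Y_i, E_i) > -1$. Using $K_{Y_i} + E_i = f_i^* K_X + F_i$ with $F_i \geq 0$, a short discrepancy calculation gives $a(D, X, 0) = a(D, Y_i, E_i) + \mathrm{ord}_D(h_i^* F_i) > -1$, so $D$ is a component of $F_3$ rather than of $E_3$, and therefore $E_3 = (h_i)^{-1}_* E_i$. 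With this identification in hand, the hypotheses of Proposition \ref{31} for the dlt pair $(Y_i, E_i)$ with $T = E_i$ and resolution $h_i$ are all satisfied; in particular, condition (iv) is precisely the inequality $a(D, Y_i, E_i) > -1$ established above. Corollary \ref{35} then yields $(h_i)_* \mathcal O_{E_3} \simeq \mathcal O_{E_i}$ and $R^j (h_i)_* \mathcal O_{E_3} = 0$ for all $j > 0$, and the Leray spectral sequence gives the desired isomorphism $H^j(E_3, \mathcal O_{E_3}) \simeq H^j(E_i, \mathcal O_{E_i})$.

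The main technical difficulty lies in the initial step: producing a single common resolution $Y_3$ that is simultaneously an isomorphism over the generic points of every stratum of $E_1$ (viewed in $Y_1$) and of every stratum of $E_2$ (viewed in $Y_2$). These are finitely many open conditions on the blow-up centers used to construct $Y_3$, and can be met by standard techniques of resolution of singularities; nevertheless, it is the step that requires the most care. Once this arrangement is secured, the isomorphisms $H^j(E_1, \mathcal O_{E_1}) \simeq H^j(E_3, \mathcal O_{E_3}) \simeq H^j(E_2, \mathcal O_{E_2})$ follow directly.
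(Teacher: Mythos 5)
Your Steps 2 and 3 are sound as conditional reasoning (the discrepancy bookkeeping showing that an $h_i$-exceptional divisor with $a(D,Y_i,E_i)>-1$ cannot appear in $E_3$, and the application of Proposition \ref{31}/Corollary \ref{35} plus Leray, are both correct), but the construction you postpone to the end is not a technicality: the common resolution demanded in your first step does not exist in general, and the obstruction is not about choosing blow-up centers generically. Take $Y_2\to Y_1$ to be the blow-up $\pi$ of a stratum $W$ of $E_1$ of codimension $k\geq 2$ (such strata exist whenever $E_1$ is reducible, e.g.\ for cusp singularities; note that $W$ is also a stratum of the snc divisor $f_1^{-1}(P)$, so this is an admissible blow-up and $Y_2$ is again a resolution as in \ref{211}). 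Since $E_1+F_1$ is contained in the snc divisor $f_1^{-1}(P)$, a codimension-$k$ stratum of $E_1$ lies in exactly $k$ components of $E_1+F_1$; in particular $W\not\subset \Supp F_1$, so the exceptional divisor $D$ satisfies $a(D,Y_1,E_1)=k-1-k=-1$ and $\mathrm{ord}_D(\pi^*F_1)=0$, hence $a(D,X,0)=-1$ and $D$ is a component of $E_2$. Now suppose $Y_3$ with $h_1,h_2$ as in your Step 1 existed. Commutativity over $X$ and birationality force $h_1=\pi\circ h_2$, so $h_1^{-1}(W)\supset h_2^{-1}(D)$ contains the strict transform of $D$, a divisor dominating $D$; thus $h_1$ has positive-dimensional fibers over the generic point of $W$ and cannot be an isomorphism there. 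So being an isomorphism over all stratum generic points of $E_1$ is flatly incompatible with dominating $Y_2$. The same example kills the key identification in Step 2: if $E_3=(h_1)^{-1}_*E_1=(h_2)^{-1}_*E_2$ held, then $E_1$ and $E_2$ would have the same number of components, which is false here; since $E_3$ necessarily contains every divisor on $Y_3$ with discrepancy $-1$ over $X$, the essential divisor genuinely gains components under such blow-ups, and no common resolution realizes both essential divisors as strict transforms simultaneously.

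This is precisely why the paper's proof of Lemma \ref{42} takes a different route: by the weak factorization theorem it reduces to a single blow-up $\varphi:Y'\to Y$ whose smooth center $C$ has simple normal crossings with $\Supp f^{-1}(P)$, and then checks $H^i(E',\mathcal O_{E'})\simeq H^i(E,\mathcal O_E)$ by direct local computation --- including exactly the case your scheme cannot see, namely $C$ a stratum of $E$, where the exceptional divisor becomes a new component of $E'$ and $E'\to E$ fails to be an isomorphism over the generic point of $C$ (the cohomology is nevertheless unchanged because the new component is a projective bundle over $C$). The paper does use your mechanism of a common resolution that is an isomorphism over the generic points of all lc centers on \emph{both} sides, but only in Remark \ref{rem48}, where the two models are connected by a $(K+E)$-MMP over $X$ whose steps are isomorphisms at those generic points; that control is available between a resolution as in \ref{211} and its dlt blow-up, not between two arbitrary resolutions $Y_1$ and $Y_2$. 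A repair of your argument would therefore compare each $Y_i$ with a dlt model as in \ref{217} and Remark \ref{rem48}, rather than comparing $Y_1$ and $Y_2$ directly.
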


\begin{proof}
Let $f':Y'\to X$ be another resolution 
with 
$K_{Y'}=f'^*K_X+F'-E'$ as in \ref{211}. 
By the weak factorization theorem (see 
\cite[Theorem 5-4-1]{matsuki} or \cite[Theorem 0.3.1(6)]{akmw}), 
we may assume that $\varphi:Y'\to Y$ is a blow-up 
whose center $C\subset \Supp f^{-1}(P)$ is 
smooth, irreducible, and has simple normal crossing with $\Supp f^{-1}(P)$. 
It means that at each point $p\in \Supp f^{-1}(P)$ there 
exists a regular coordinate system $\{x_1, \cdots, x_n\}$ in a neighborhood 
$p\in U_p$ such that 
$$\Supp f^{-1}(P)\cap U_p=\left\{\prod_{j\in J}x_j=0\right\}$$ and 
$C\cap U_p=\{x_i=0 \ \text{for}\ i\in I\}$ for some 
subsets $I, J\subset \{1, \cdots, n\}$. 
Thus, we can directly check that 
$H^i(E, \mathcal O_E)\simeq 
H^i(E', \mathcal O_{E'})$ for every $i$. 
\end{proof}

\begin{say}
Let $\Gamma$ be the dual 
complex of $E$ and let $|\Gamma|$ be the topological 
realization of $\Gamma$. 
Note that the vertices of $\Gamma$ correspond to the 
components $E_i$, 
the edges correspond to $E_i\cap E_j$, and so on, where 
$E=\sum _i E_i$ is the irreducible 
decomposition of $E$. 
More precisely, $E$ defines a conical 
polyhedral complex $\Delta$ (see \cite[Chapter II, Definition 5]{kkms}). 
By \cite[p.70 Remark]{kkms}, we get a compact polyhedral complex 
$\Delta_0$ from $\Delta$. 
The dual complex $\Gamma$ of $E$ is essentially the same as this compact 
polyhedral complex $\Delta_0$ and $|\Gamma|=|\Delta_0|$ as 
topological spaces. 
See the construction of the {\em{dual complex}} 
in \cite{stepanov} and \cite[Section 2]{payne} for details. 
Therefore, we obtain the following lemma. 
\end{say} 
\begin{lem}
The dual complex $\Gamma$ is well defined and 
$|\Gamma|$ is independent of $f$. 
\end{lem}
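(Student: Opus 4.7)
The plan is to mimic the proof of Lemma \ref{42} and reduce to a single blow-up via weak factorization, then track the combinatorial change in the dual complex by a direct discrepancy computation. Given two resolutions $f\colon Y\to X$ and $f'\colon Y'\to X$ as in \ref{211}, the weak factorization theorem (\cite[Theorem 0.3.1(6)]{akmw}) connects them by a zigzag of blow-ups $\varphi\colon Y'\to Y$ whose centers $C\subset \Supp f^{-1}(P)$ are smooth, irreducible, and have simple normal crossings with $\Supp f^{-1}(P)$. It suffices to show that each such elementary blow-up leaves $|\Gamma|$ invariant.

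Let $G$ be the exceptional divisor of $\varphi$, let $c=\mathrm{codim}_Y C$, and let $k_E$ (resp.\ $k_F$) be the number of irreducible components of $E$ (resp.\ of $\Supp F$) containing $C$; snc forces $c\geq k_E+k_F$. Pulling back $K_Y=f^*K_X+F-E$ and using $K_{Y'}=\varphi^*K_Y+(c-1)G$ one gets
$$
K_{Y'}=f'^{*}K_X+\widetilde F-\widetilde E+\bigl((c-1)+k_F-k_E\bigr)G,
$$
where tildes denote strict transforms. Since $E$ is the reduced part of prime divisors with discrepancy $-1$, the exceptional $G$ belongs to the new essential divisor $E'$ iff the bracket equals $-1$. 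Combined with $c\geq k_E+k_F$, this forces $k_F=0$ and $c=k_E$, i.e.~$C$ is itself a stratum of $E$ disjoint from $\Supp F$.

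\textbf{Case 1} (\emph{$C$ is such a stratum}): $E'=\widetilde E+G$. Locally at a point of $C$ choose snc coordinates so that $E_{i_1},\ldots,E_{i_{k_E}}$ are the components containing $C$; after blow-up, $\widetilde E_{i_1},\ldots,\widetilde E_{i_{k_E}}$ become pairwise disjoint within the relevant chart and each meets $G$ transversally. Consequently, the face of $\Gamma$ attached to $C$ (the simplex with vertices $E_{i_1},\ldots,E_{i_{k_E}}$) is replaced in $\Gamma'$ by the cone over its boundary with apex the new vertex $G$ — this is precisely a stellar subdivision, and stellar subdivisions preserve the topological realization.

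\textbf{Case 2} (\emph{all other configurations}): $E'=\widetilde E$. For any stratum $W=E_{i_1}\cap\cdots\cap E_{i_r}$ of $E$, the containment $W\subset C$ is ruled out (it would force $c=k_E$ and $k_F=0$), so $\widetilde W=\widetilde E_{i_1}\cap\cdots\cap\widetilde E_{i_r}$ is nonempty, irreducible, and birational to $W$; conversely no new strata of $\widetilde E$ appear, because $G\not\subset E'$ and snc persists. Thus the combinatorial face poset of $\Gamma'$ equals that of $\Gamma$, and $|\Gamma|=|\Gamma'|$.

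\textbf{Main obstacle.} The discrepancy calculation is routine, and the stellar-subdivision identification in Case 1 is standard once the local picture is written down. The genuinely careful point is Case 2: one must verify that strict transforms of irreducible strata remain irreducible and that no spurious new strata of $\widetilde E$ are produced, both of which rest on the snc hypothesis on $C$ relative to $\Supp f^{-1}(P)$ together with the numerical identification above of when $G$ belongs to $E'$.
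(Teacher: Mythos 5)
Your proposal follows essentially the same route as the paper: the paper's proof reduces to elementary blow-ups by weak factorization exactly as in the proof of Lemma \ref{42}, and the remark immediately after the lemma points out that Case 1) of Stepanov's lemma (blow-up of a stratum induces a stellar subdivision, hence a homeomorphism of realizations) is all that is needed. Your discrepancy computation is precisely the ``easily checked'' content the paper leaves implicit, namely why only Stepanov's Case 1 can occur for the essential divisor $E$ even though $\Supp E\subsetneq \Supp f^{-1}(P)$ in general: the exceptional divisor $G$ has coefficient $-1$ exactly when $C$ is a stratum of $E$, and otherwise the face poset of $\Gamma$ is untouched. So the architecture is sound and matches the intended proof.

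Two local slips should be corrected, though neither affects the dichotomy. First, the coefficient of $G$ in $\varphi^*F$ is $\operatorname{mult}_C F=\sum_{C\subset F_j}a_j$ where $F=\sum_j a_jF_j$, not the number $k_F$ of components of $\Supp F$ through $C$: the coefficients $a_j$ are discrepancies and can exceed $1$, so $F$ need not be reduced. Since $\operatorname{mult}_C F\geq k_F\geq 0$, the equality $(c-1)+\operatorname{mult}_C F-k_E=-1$ still forces $c=k_E$ and $\operatorname{mult}_C F=0$, so your conclusion stands, but the displayed formula is wrong as written. Second, in Case 1 the strict transforms $\widetilde E_{i_1},\ldots,\widetilde E_{i_{k_E}}$ are \emph{not} pairwise disjoint near $G$ when $k_E\geq 3$; what is true (and what you would see in the standard charts) is that the total intersection of all $k_E$ strict transforms becomes empty over $C$, while every proper subcollection still intersects along a locus meeting $G$. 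Your literal local claim would delete the entire boundary $\partial\sigma_C$ and give the wrong complex; the correct local picture is exactly what produces the stellar subdivision you then (correctly) invoke. Finally, you silently omit the first assertion of the lemma: the paper disposes of the well-definedness of $\Gamma$ by citing the conical polyhedral complex construction of \cite[Chapter II]{kkms}, and a complete write-up should at least record that reference, since ``the'' dual complex of $E$ (with its possible multiple cells on a common vertex set) needs this foundation before the subdivision argument makes sense.
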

\begin{proof}
As we explained above, the well-definedness 
of $\Gamma$ is in \cite[Chapter II]{kkms}. 
By the weak factorization theorem (see \cite[Theorem 5-4-1]{matsuki} 
or \cite[Theorem 0.3.1(6)]{akmw}), 
we can easily check that the topological 
realization $|\Gamma|$ does not depend 
on $f$.  
\end{proof}

\begin{rem}
The paper \cite{stepanov} 
discusses the dual complex of $\Supp f^{-1}(P)$ by the 
same method. 
Case 1) in the proof of \cite[Lemma]{stepanov} 
is sufficient for our purposes. 
Note that we treat the dual complex $\Gamma$ of $E$. 
In general, $\Supp E\subsetneq \Supp f^{-1}(P)$.  
\end{rem}

\begin{say}\label{217} 
Let $g:Z\to X$ be a projective birational 
morphism as in Lemma \ref{conj1}. 
Then we have 
$0\to \mathcal O_Z(-D)\to \mathcal O_Z\to 
\mathcal O_D\to 0$. 
By the vanishing theorem, 
we obtain $R^ig_*\mathcal O_Z(K_Z)=0$ for every $i>0$. 
Therefore, we have 
$$R^ig_*\mathcal O_Z\simeq 
R^ig_*\mathcal O_D\simeq 
H^i(D, \mathcal O_D)$$ for 
every $i>0$. 
We note that $D$ is connected since $\mathcal O_X\simeq 
g_*\mathcal O_Z\to g_*\mathcal O_D$ is surjective. 
By applying Corollary \ref{35}, we can construct a resolution 
$h:Y\to Z$ such that $$K_Y+E-F=h^*(K_Z+D)=f^*K_X, $$
where $F$ and $E$ are both effective Cartier divisors without common 
irreducible 
components, $\Supp E$ is a simple normal crossing divisor, 
$f=g\circ h$, 
$h$ is an isomorphism outside $g^{-1}(P)$, 
$h$ is an isomorphism over the generic point of any lc center 
of $(Z, D)$, $R^ih_*\mathcal O_E=0$ for 
every $i>0$, and $h_*\mathcal O_E\simeq \mathcal O_D$. 
Therefore, $H^i(D, \mathcal O_D)\simeq H^i(E, \mathcal O_E)$ for 
every $i$. 
Apply the principalization to the defining ideal sheaf 
$\mathcal I$ of $f^{-1}(P)$. 
Then we obtain a sequence of blow-ups whose 
centers have simple normal crossing with $E$ (cf.~\cite[Theorem 3.35]{kollar-book}). 
In this process, $H^i(E, \mathcal O_E)$ does not change for every $i$ 
(cf.~the proof of Lemma \ref{42}). 
Therefore, we may assume that $f^{-1}(P)$ is a divisor on $Y$. 
We further take a sequence of blow-ups whose centers have simple normal 
crossing with $E$. 
Then we can make $\Supp f^{-1}(P)$ a simple normal crossing divisor 
on $Y$ 
(cf.~\cite[Corollary 7.9]{bev} or \cite[Proposition 6]{kollar-semi}). 
We note that we may assume that $f$ is an isomorphism outside $P\in X$.  
We also note that 
$R^ig_*\mathcal O_Z\simeq R^if_*\mathcal O_Y$ for every 
$i$ because $Z$ has only rational singularities. 
So, we obtain the next proposition. 
\end{say}

\begin{prop}\label{46} 
Let $f:Y\to X$ be a resolution as in {\em{\ref{211}}}. 
Then $R^if_*\mathcal O_Y\simeq H^i(E, \mathcal O_E)$ 
for every $i>0$. Therefore, 
$P\in X$ is Cohen--Macaulay, 
equivalently, $P\in X$ is Gorenstein, if and only if 
$H^i(E, \mathcal O_E)=0$ for $0<i<n-1$. 
\end{prop}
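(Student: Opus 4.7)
The plan is to bootstrap the dlt blow-up of Lemma \ref{conj1} and transfer cohomology computations from $X$ to $D$ to $E$. First, I would apply Lemma \ref{conj1} to obtain a projective birational morphism $g:Z\to X$ with $K_Z+D=g^*K_X$, $(Z,D)$ a $\mathbb Q$-factorial dlt pair, $g$ an isomorphism outside $P$, and $D$ a reduced divisor supported on $g^{-1}(P)$. From the exact sequence
\[ 0\to\mathcal O_Z(-D)\to\mathcal O_Z\to\mathcal O_D\to 0, \]
combined with the vanishing $R^ig_*\mathcal O_Z(K_Z)=0$ for $i>0$ (since $-D\sim_{\mathbb Q,g}K_Z$ and $Z$ has only rational singularities), one obtains $R^ig_*\mathcal O_Z\simeq R^ig_*\mathcal O_D$, which is just $H^i(D,\mathcal O_D)$ because $g$ contracts $D$ to the single point $P$.

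Next, I would construct a resolution $h:Y\to Z$ following the procedure in the proof of Proposition \ref{31} applied to $(Z,D)$ with $T=D$. This produces effective Cartier divisors $E,F$ without common components on $Y$ satisfying $K_Y+E-F=h^*K_Z=f^*K_X$, where $f=g\circ h$, with $\Supp E$ simple normal crossing, $h$ an isomorphism outside $g^{-1}(P)$, and $h$ an isomorphism over the generic point of every lc center of $(Z,D)$. Corollary \ref{35} then delivers $Rh_*\mathcal O_E\simeq \mathcal O_D$, whence $H^i(E,\mathcal O_E)\simeq H^i(D,\mathcal O_D)$ for every $i$. To bring $h$ into the form required by \ref{211}, i.e.\ with $\Supp f^{-1}(P)$ a simple normal crossing divisor, I would principalize the ideal of $f^{-1}(P)$ by further blow-ups in centers having simple normal crossing with $E$; the local coordinate computation of Lemma \ref{42} ensures that such blow-ups leave $H^i(E,\mathcal O_E)$ unchanged.

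Finally, since dlt singularities are rational, $R^ig_*\mathcal O_Z=0$ for $i>0$, so the Leray spectral sequence for $f=g\circ h$ collapses to give $R^if_*\mathcal O_Y\simeq R^ig_*\mathcal O_Z\simeq H^i(E,\mathcal O_E)$ for $i>0$; Lemma \ref{42} then extends this isomorphism to any resolution in the class of \ref{211}. The Cohen--Macaulay criterion is immediate from Corollary \ref{333}: $X$ is Cohen--Macaulay iff $R^if_*\mathcal O_Y=0$ for $0<i<n-1$, iff $H^i(E,\mathcal O_E)=0$ in the same range. Because $P\in X$ has index one, $K_X$ is Cartier, so Cohen--Macaulay automatically upgrades to Gorenstein. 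The main delicate point is the construction of $h$ meeting all the properties demanded in \ref{211} simultaneously, which is handled by the two-stage procedure above without disturbing the cohomology of $\mathcal O_E$.
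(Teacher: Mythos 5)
Your proposal is correct and follows essentially the same route as the paper's own proof, which runs through \ref{217}: the dlt blow-up of Lemma \ref{conj1}, the sequence $0\to\mathcal O_Z(-D)\to\mathcal O_Z\to\mathcal O_D\to 0$ together with relative vanishing to get $R^ig_*\mathcal O_Z\simeq H^i(D,\mathcal O_D)$ for $i>0$, Corollary \ref{35} to get $H^i(D,\mathcal O_D)\simeq H^i(E,\mathcal O_E)$, principalization with centers having simple normal crossing with $E$ plus the local computation of Lemma \ref{42} to reach the setting of \ref{211}, and finally rationality of $Z$ together with Corollary \ref{333}. One harmless slip in your last paragraph: you write ``$R^ig_*\mathcal O_Z=0$ for $i>0$,'' which contradicts your own first paragraph (indeed $R^{n-1}g_*\mathcal O_Z\simeq \mathbb C$ by Remark \ref{4646}); what the rationality of the dlt variety $Z$ actually gives is $R^ih_*\mathcal O_Y=0$ for $i>0$ and $h_*\mathcal O_Y\simeq\mathcal O_Z$, from which the Leray spectral sequence for $f=g\circ h$ yields exactly the conclusion $R^if_*\mathcal O_Y\simeq R^ig_*\mathcal O_Z$ that you then correctly use.
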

\begin{proof}
It is a direct consequence of Lemma \ref{42} and 
Corollary \ref{333} by \ref{217}. 
\end{proof}

\begin{rem}\label{4646}
In \ref{217}, $(K_Z+D)|_D=K_D\sim 0$. 
Therefore, $H^{n-1}(D, \mathcal O_D)$ is dual to 
$H^0(D, \mathcal O_D)$, where 
$n=\dim X$. 
So, $R^{n-1}g_*\mathcal O_Z\simeq \mathbb C(P)$. 
Thus, $P\in X$ is not a rational singularity. 
\end{rem}

\begin{rem}
Shihoko Ishii proves $$R^if_*\mathcal O_Y\simeq H^i(f^{-1}(P)_{\mathrm {red}}, 
\mathcal O_{f^{-1}(P)_{\mathrm {red}}})$$ for every $i>0$ by the 
theory of Du Bois singularities 
(cf.~\cite[Corollary 1.5, Theorem 2.3]{ishii} 
and \cite[Proposition 7.1.13, Theorem 7.1.17]{ishii-book}). 
For details, see \cite{ishii} and \cite{ishii-book}. 
\end{rem}

By using the minimal model program with 
scaling, we can prove Proposition \ref{46} without 
appealing to Lemma \ref{42}. 

\begin{rem}\label{rem48}
Let $f:Y\to X$ with $K_Y+E=f^*K_X+F$ be as in \ref{211}. 
Let $H$ be an effective $f$-ample $\mathbb Q$-divisor on $Y$ such 
that $(Y, E+H)$ is dlt and that 
$K_Y+E+H$ is nef over $X$. 
We can run the minimal model program on $K_Y+E$ over $X$ with scaling 
of $H$. 
Then we obtain a dlt blow-up 
$f': Y'\to X$ such that 
$(Y', E')$ is a $\mathbb Q$-factorial dlt pair and that 
$K_{Y'}+E'=f'^*K_X$ where 
$E'$ is the pushforward of $E$ on $Y'$ (cf.~\cite[Section 4]{fujino-ss}). 
We note that each step of the minimal model program 
$$
Y\dashrightarrow Y_1\dashrightarrow Y_2\dashrightarrow \cdots \dashrightarrow 
Y'
$$
is an isomorphism at the generic point of any lc center of $(Y, E)$. 
By \ref{217}, $R^if_*\mathcal O_Y\simeq R^if'_*\mathcal O_{Y'}\simeq 
R^if'_*\mathcal O_{E'}\simeq H^i(E', \mathcal O_{E'})$ for 
every $i>0$. 
By taking a common resolution 
\begin{equation*}
\xymatrix{ & W\ar[dl]_{\alpha} \ar[dr]^{\beta}\\
 Y \ar@{-->}[rr]  & & Y'}
\end{equation*}
such that $\alpha$ (resp.~$\beta$) is an isomorphism over the generic 
point of any lc center of $(Y, E)$ (resp.~$(Y', E')$) 
and that $\Exc (\alpha)$, $\Exc (\beta)$, and $\Exc (\alpha)\cup \Exc (\beta)\cup
\Supp \alpha_*^{-1}E$ are simple normal crossing divisors on $W$, we can easily 
check that 
$$
H^i(E, \mathcal O_{E})\simeq H^i(E', \mathcal O_{E'})
$$ 
for every $i$ because $R\alpha_*\mathcal O_T\simeq 
\mathcal O_E$ and $R\beta_*\mathcal O_{T}\simeq \mathcal O_{E'}$ 
(cf.~Corollary \ref{35}). 
Note that $K_W+\Delta_1=\alpha^*(K_Y+E)$ and $K_W+\Delta_2=\beta^*(K_{Y'}+E')$ 
with $\Delta_1^{=1}=T=\Delta_2^{=1}$ such that $T$ is a reduced simple normal crossing 
divisor on $W$. 
Therefore, 
$$H^i(E, \mathcal O_E)\simeq H^i(E', \mathcal O_{E'})\simeq R^if_*\mathcal O_Y$$ for 
$i>0$. 

Let $E=\sum _i E_i$ be the irreducible decomposition 
and let $E'=\sum _i E'_i$ be the corresponding irreducible decomposition. 
Let $E_{i_0}$ be an irreducible component of $E$ and let $T_{i_0}$ be the 
strict transform of $E_{i_0}$ on $W$. 
By applying the connectedness lemma (cf.~\cite[Theorem 5.48]{km}) 
to $\alpha:T_{i_0}\to E_{i_0}$ and $\beta:T_{i_0}\to E'_{i_0}$, 
we know that the number of the connected components 
of $\sum _{i\ne i_0}E_i|_{E_{i_0}}$ coincides with 
that of $\sum _{i\ne i_0}E'_i|_{E'_{i_0}}$. 
Therefore, $\sum _{i\ne i_0}E_i|_{E_{i_0}}$ has at most two connected components 
by applying Proposition \ref{216} to $(E'_{i_0}, \sum _{i\ne i_0}E'_i|_{E'_{i_0}})$. 
Note that $(E'_{i_0}, \sum _{i\ne i_0}E'_i|_{E'_{i_0}})$ is dlt and 
$K_{E'_{i_0}}+\sum _{i\ne i_0}E'_i|_{E'_{i_0}}\sim 0$. 
\end{rem}

\begin{say}[Invariant $\mu$]\label{4111} 
Let $P\in X$ be an isolated lc singularity with index one which 
is not lt. Let $g:Z\to X$ be a projective 
birational morphism such that 
$K_Z+D=g^*K_X$ and that $(Z, D)$ is a $\mathbb Q$-factorial dlt pair. 
We define 
$$\mu=\mu(P\in X)=\min \{\,\dim W \,| \, 
W \ {\text{is an lc center of}} \ (Z, D)\}. 
$$ 
This invariant $\mu$ was first introduced in \cite[Definition 4.12]{fujino2}. 
Let $D=\sum _i D_i$ be the irreducible decomposition. 
Then $K_{D_i}+\Delta_i:=(K_Z+D)|_{D_i}\sim 0$ and 
$(D_i, \Delta_i)$ is dlt. 
By applying Proposition \ref{216} to 
each $(D_i, \Delta_i)$, every minimal lc center of 
$(Z, D)$ is $\mu$-dimensional and 
all the minimal lc centers are 
birational each other. 
Note that $D$ is connected. 

Let $g':Z'\to X$ be another projective birational morphism 
such that $K_{Z'}+D'=g'^*K_{X}$ and 
that $(Z', D')$ is a $\mathbb Q$-factorial 
dlt pair. 
Then it is easy to see that 
$(Z, D)\dashrightarrow (Z', D')$ is $B$-birational. 
This means that there is a common resolution 
\begin{equation*}
\xymatrix{ & W\ar[dl]_{\alpha} \ar[dr]^{\beta}\\
 Z \ar@{-->}[rr]  & & Z'}
\end{equation*}
such that $\alpha^*(K_Z+D)=\beta^*(K_{Z'}+D')$. 
Then we can easily check that 
\begin{align*}
&\min \{\,\dim W \,| \, 
W \ {\text{is an lc center of}} \ (Z, D)\}
\\ &=\min \{\,\dim W' \,| \, 
W' \ {\text{is an lc center of}} \ (Z', D')\}.
\end{align*}
See, for example, the proof of \cite[Lemma 4.9]{fujino}. 
Therefore, $\mu(P\in X)$ is well-defined. 
Let $f:Y\to X$ with $K_Y=f^*K_X+F-E$ be as in \ref{211}. 
Then it is easy to see that 
$$\mu=\mu(P\in X)=\min \{\,\dim W \,| \, 
W \ {\text{is a stratum of}} \ E\} 
$$ 
by Remark \ref{rem48}. 
\end{say}

Now, the following theorem is 
not difficult to prove. 

\begin{thm}\label{4-10} 
We use the notation in {\em{\ref{211}}}. 
We assume $\mu (P\in X)=0$. 
Then $H^i(E, \mathcal O_E)\simeq 
H^i(|\Gamma|, \mathbb C)$. Therefore, 
$P\in X$ is Cohen--Macaulay, equivalently, 
$P\in X$ is Gorenstein, if and only if 
$$
H^i(|\Gamma|, \mathbb C)=\left\{
\begin{array}{ll}
\mathbb C &\text{for}\  i=0, n-1, \\
0 &\text{otherwise.} 
\end{array}
\right.
$$ 
\end{thm}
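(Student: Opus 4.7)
The approach is to compute $H^{i}(E,\mathcal O_{E})$ via the Mayer--Vietoris spectral sequence attached to a simple normal crossing model and, under the hypothesis $\mu(P\in X)=0$, to reduce it to the simplicial cochain complex of the dual complex $\Gamma$.

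The first move is to replace $(Y,E)$ by a log resolution of the dlt blow-up. Choose a dlt blow-up $f':Y'\to X$ as in Lemma \ref{conj1} with $(Y',E')$ a $\mathbb Q$-factorial dlt pair and $K_{Y'}+E'=f'^{*}K_{X}$, and let $\pi:W\to Y'$ be a log resolution that is an isomorphism at the generic point of every lc center of $(Y',E')$. Write $K_{W}+\Delta_{W}=\pi^{*}(K_{Y'}+E')$ and put $T=\Delta_{W}^{=1}$, a reduced snc divisor projective over $P$. By Remark \ref{rem48} and Corollary \ref{35} one has $H^{i}(T,\mathcal O_{T})\simeq H^{i}(E',\mathcal O_{E'})\simeq H^{i}(E,\mathcal O_{E})$ for every $i$, while the well-definedness of $|\Gamma|$ recorded above gives $|\Gamma_{T}|=|\Gamma|$. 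It therefore suffices to prove $H^{i}(T,\mathcal O_{T})\simeq H^{i}(|\Gamma_{T}|,\mathbb C)$.

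The standard resolution
\[
0\to \mathcal O_{T}\to \bigoplus_{\lambda}\mathcal O_{T_{\lambda}}\to \bigoplus_{\lambda<\nu}\mathcal O_{T_{\lambda}\cap T_{\nu}}\to \cdots
\]
of the snc variety $T=\bigcup_{\lambda}T_{\lambda}$ yields a spectral sequence
\[
E_{1}^{p,q}=\bigoplus_{\substack{W_{0}\text{ stratum of }T\\ \operatorname{codim}_{T}W_{0}=p}}H^{q}(W_{0},\mathcal O_{W_{0}})\ \Rightarrow\ H^{p+q}(T,\mathcal O_{T}).
\]
The crux is the vanishing $H^{q}(W_{0},\mathcal O_{W_{0}})=0$ for every stratum $W_{0}$ and every $q>0$. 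For this, apply successive adjunction on $(W,\Delta_{W})$ to obtain a projective dlt pair $(W_{0},\Xi_{W_{0}})$ with $K_{W_{0}}+\Xi_{W_{0}}\sim 0$ whose lc centers are precisely the strata of $T$ contained in $W_{0}$. Via $\pi$ these correspond bijectively to lc centers of $(Y',E')$ inside $\pi(W_{0})$. Any lc center contains a minimal lc center, and by \ref{4111} (Proposition \ref{216} applied component-by-component to each $(E'_{i},\Theta'_{i})$) every minimal lc center of $(Y',E')$ has dimension $\mu=0$, so $\widetilde\mu(W_{0},\Xi_{W_{0}})=0$. Proposition \ref{15} now delivers $H^{q}(W_{0},\mathcal O_{W_{0}})=0$ for $q>0$. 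Granted this vanishing, the spectral sequence degenerates at $E_{2}$; since each stratum is irreducible, the $E_{1}^{\bullet,0}$ row is $\bigoplus_{W_{0}}\mathbb C$ with the alternating restriction differential, i.e.\ the simplicial cochain complex of $\Gamma_{T}$. Hence $H^{i}(T,\mathcal O_{T})\simeq H^{i}(|\Gamma_{T}|,\mathbb C)=H^{i}(|\Gamma|,\mathbb C)$.

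For the Cohen--Macaulay (equivalently Gorenstein, since the index is one) criterion, Proposition \ref{46} reduces the question to the vanishing of $H^{i}(E,\mathcal O_{E})$ in the range $0<i<n-1$. Connectedness of $E$ gives $H^{0}(|\Gamma|,\mathbb C)=\mathbb C$, and Remark \ref{4646} gives $H^{n-1}(E,\mathcal O_{E})\simeq\mathbb C$, so that $H^{n-1}(|\Gamma|,\mathbb C)=\mathbb C$ is automatic. The criterion therefore reads precisely as the stated description of $H^{i}(|\Gamma|,\mathbb C)$ in intermediate degrees. The main obstacle in the plan is the stratum vanishing, and specifically the identity $\widetilde\mu(W_{0},\Xi_{W_{0}})=0$; this rests on the global, non-formal fact (established in \ref{4111} via Proposition \ref{216} and the connectedness of $E'$) that \emph{every} minimal lc center of $(Y',E')$ is $0$-dimensional, not merely the existence of one such stratum.
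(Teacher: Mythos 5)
Your overall architecture is the same as the paper's: you compute $H^i(E,\mathcal O_E)$ by the spectral sequence of \ref{2-11}, reduce everything to the vanishing $H^q(W_0,\mathcal O_{W_0})=0$ for $q>0$ on the strata, identify the $E_1^{\bullet,0}$ row with the simplicial cochain complex of the dual complex, and settle the Cohen--Macaulay criterion via Proposition \ref{46}, Remark \ref{4646}, and connectedness of $E$. (The paper runs the spectral sequence directly on $E$ rather than on your model $T$; by Lemma \ref{42} and Remark \ref{rem48} this difference is immaterial, and your explicit reduction to the dlt blow-up is essentially what the paper does implicitly.) Your closing observation that the crux is the global fact from \ref{4111} --- that \emph{every} minimal lc center of $(Y',E')$ is $0$-dimensional --- is correct.

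However, the crux step itself has a genuine gap: ``Proposition \ref{15} now delivers $H^{q}(W_{0},\mathcal O_{W_{0}})=0$'' is not a legitimate application of Proposition \ref{15}, which is stated for a projective \emph{dlt} pair, so in particular requires an effective boundary. Your $\Xi_{W_0}$, obtained by successive adjunction from $K_W+\Delta_W=\pi^*(K_{Y'}+E')$, need not be effective: $Y'$ is only $\mathbb Q$-factorial dlt, hence in general singular, and a log resolution $\pi$ extracts divisors of positive discrepancy, which enter $\Delta_W$ with negative coefficients and can meet the strata of $T$ (already blowing up a point of $E'$ on a smooth threefold $Y'$ produces a coefficient $-1$ component of $\Delta_W$ intersecting $T$). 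So $(W_0,\Xi_{W_0})$ is merely a log smooth pair with a sub-boundary, not dlt; nor can you discard the negative part, since then the log canonical divisor is no longer $\mathbb Q$-trivial. This is precisely what Corollary \ref{3777} --- the one ingredient of the paper's one-paragraph proof that you never invoke --- was designed to fix: there the stratum is first blown up to a divisor $E_W$ (a projective bundle over the stratum, so $h^i(\mathcal O)$ is unchanged), a fresh dlt blow-up isomorphic at the generic point of $E_W$ is constructed, Proposition \ref{15} is applied to the honest adjunction dlt pair on the strict transform, and the vanishing is transported back by birational invariance of $h^i(\mathcal O)$ among varieties with rational singularities. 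Replacing your citation of Proposition \ref{15} by Corollary \ref{3777} (applied, for instance, to the projective dlt pairs $(E'_{i_0},\operatorname{Diff})$ given by adjunction on the components of $E'$, with the induced resolutions $T_{i_0}\to E'_{i_0}$, noting $\widetilde\mu=0$ by \ref{4111}) repairs the argument; as written, the vanishing step would fail.
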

\begin{proof}
We use the spectral sequence in \ref{2-11} to calculate 
$H^i(E, \mathcal O_E)$. 
By Corollary \ref{3777}, $H^q(E^{[p]}, \mathcal O_{E^{[p]}})=0$ for 
every $q>0$. 
Therefore, we obtain $E_2^{i, 0}\simeq H^i(|\Gamma|, \mathbb C)$ for every $i$ 
and the spectral sequence degenerates at $E_2$. 
Thus we have $H^i(E, \mathcal O_E)\simeq 
H^i(|\Gamma|, \mathbb C)$ for every $i$.  
\end{proof}

\begin{say}\label{2-11}
Let $E$ be a simple normal crossing variety and let $E=\sum _iE_i$ 
be the irreducible decomposition. 
We put $E^{[0]}=\coprod_i E_i$, $E^{[1]}=\coprod _{i, j} (E_i\cap 
E_j)$, $\cdots$, $E^{[p]}=\coprod _{i_0, \cdots, i_p}(E_{i_0}
\cap \cdots \cap E_{i_p})$, $\cdots$. Let 
$a_p:E^{[p]}\to E$ be the obvious map. Then it is well known that 
$$(a_0)_*\mathcal O_{E^{[0]}}\to 
(a_1)_*\mathcal O_{E^{[1]}}\to \cdots \to 
(a_p)_*\mathcal O_{E^{[p]}}\to \cdots
$$ is a resolution of $\mathcal O_E$. 
By taking the associated 
hypercohomology, we obtain 
a spectral sequence 
$$E^{p,q}_1=H^q(E^{[p]}, \mathcal O_{E^{[p]}})\Rightarrow 
H^{p+q}(E, \mathcal O_E).$$ 
\end{say}

We close this section with the following 
obvious two propositions. 

\begin{prop} 
We assume that the dimension of $X$ is $\geq 3$. 
By the above spectral sequence, if $P\in X$ is 
Cohen--Macaulay, then $H^1(|\Gamma|, \mathbb C)=0$. 
\end{prop}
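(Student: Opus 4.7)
The plan is to trace $H^1(|\Gamma|,\mathbb C)$ through the spectral sequence of \ref{2-11} and combine it with the Cohen--Macaulay vanishing supplied by Proposition \ref{46} and Corollary \ref{333}.

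First I would identify the row $q=0$ with simplicial cochains on the dual complex. Since each $E^{[p]}$ is a disjoint union of smooth connected intersections of components of $E$, the sheaf $\mathcal O_{E^{[p]}}$ has $H^0(E^{[p]},\mathcal O_{E^{[p]}})=\mathbb C^{\#\{p\text{-simplices of }\Gamma\}}$, and the $d_1$ differential $(a_p)_*\mathcal O_{E^{[p]}}\to (a_{p+1})_*\mathcal O_{E^{[p+1]}}$ restricts on global sections to the usual simplicial coboundary. Hence the bottom row of the $E_1$-page is exactly the simplicial cochain complex $C^{\bullet}(\Gamma,\mathbb C)$, so
\[
E_2^{p,0}\simeq H^p(|\Gamma|,\mathbb C)\qquad \text{for every }p.
\]

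Next I would check that the $(1,0)$-entry is a permanent cycle. The differential $d_2\colon E_2^{1,0}\to E_2^{3,-1}$ vanishes because $E_2^{3,-1}=0$, and the only possibly incoming $d_2$ is from $E_2^{-1,1}=0$; the same is true for all higher $d_r$. Hence $E_\infty^{1,0}=E_2^{1,0}\simeq H^1(|\Gamma|,\mathbb C)$. Because $E_\infty^{2,-1}=0$, the term $E_\infty^{1,0}$ sits as a subspace of $H^1(E,\mathcal O_E)$ in the spectral sequence filtration.

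Finally, I would invoke the hypotheses. By Proposition \ref{46}, $H^1(E,\mathcal O_E)\simeq R^1f_*\mathcal O_Y$, and by Corollary \ref{333}, the Cohen--Macaulay assumption together with $n=\dim X\geq 3$ forces $R^1f_*\mathcal O_Y=0$. Therefore $H^1(E,\mathcal O_E)=0$, so its submodule $H^1(|\Gamma|,\mathbb C)$ vanishes. There is no real obstacle here; the only point that needs a little care is the identification of the $d_1$-coboundary on the $q=0$ row with the simplicial coboundary of $\Gamma$, but this is standard once one has fixed the Čech-type resolution given in \ref{2-11}.
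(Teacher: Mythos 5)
Your argument is correct and is exactly the paper's proof written out in full: the paper's one-line justification (``$H^1(|\Gamma|,\mathbb C)\ne 0$ implies $H^1(E,\mathcal O_E)\ne 0$'' via the spectral sequence of \ref{2-11}) is precisely your observation that $E_2^{1,0}\simeq H^1(|\Gamma|,\mathbb C)$ is a permanent cycle injecting into $H^1(E,\mathcal O_E)$, which vanishes by Proposition \ref{46} and Corollary \ref{333} when $n\geq 3$. The only cosmetic caveat is that the $(p+1)$-fold intersections $E_{i_0}\cap\cdots\cap E_{i_p}$ need not be connected, so the $p$-cells of $\Gamma$ correspond to their connected components rather than to the intersections themselves; this does not affect the identification of the bottom row with the simplicial cochain complex.
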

\begin{proof}
By the spectral sequence in \ref{2-11}, 
it is easy to see that $H^1(|\Gamma|, \mathbb C)\ne 0$ implies 
$H^1(E, \mathcal O_E)\ne 0$. 
\end{proof}

\begin{prop}
Let $P\in X$ be an $n$-dimensional 
isolated lc singularity with index one which is not lt. 
If $P\in X$ is Cohen--Macaulay, then 
\begin{align*}
\chi (\mathcal O_E)&:=\sum _i (-1)^i h^i(E, \mathcal O_E)=
1+(-1)^{n-1}\\ & =\sum_{p,q} (-1)^{p+q} \dim H^q(E^{[p]}, \mathcal O_{E^{[p]}}).
\end{align*}
\end{prop}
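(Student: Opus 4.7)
The plan is to read off the two equalities as direct bookkeeping consequences of the results already assembled in this section. There is no serious obstacle; the work has been done.

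For the right-hand equality, the spectral sequence in \ref{2-11} has $E_1^{p,q}=H^q(E^{[p]},\mathcal O_{E^{[p]}})$ and abuts to $H^{p+q}(E,\mathcal O_E)$. Since alternating sums of dimensions are preserved page by page in any convergent first-quadrant spectral sequence of finite-dimensional vector spaces, I get
$$
\chi(\mathcal O_E)=\sum_i(-1)^i\dim H^i(E,\mathcal O_E)=\sum_{p,q}(-1)^{p+q}\dim H^q(E^{[p]},\mathcal O_{E^{[p]}}).
$$

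For the left-hand equality I would simply tabulate the three nonzero cohomology groups of $\mathcal O_E$. First, $H^0(E,\mathcal O_E)=\mathbb C$: in the setup \ref{217} the reduced divisor $D$ is connected because $\mathcal O_X\simeq g_*\mathcal O_Z\twoheadrightarrow g_*\mathcal O_D$, and we also established $H^i(D,\mathcal O_D)\simeq H^i(E,\mathcal O_E)$ for every $i$, hence $h^0(E,\mathcal O_E)=h^0(D,\mathcal O_D)=1$. Next, Proposition \ref{46} identifies $R^if_*\mathcal O_Y$ with $H^i(E,\mathcal O_E)$ for $i>0$, and Corollary \ref{333} says Cohen--Macaulayness of $P\in X$ is equivalent to $R^if_*\mathcal O_Y=0$ for $0<i<n-1$; combined, this gives $h^i(E,\mathcal O_E)=0$ in that range. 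Finally, Remark \ref{4646} (Serre duality on $D$ applied to $K_D\sim 0$, transported to $E$) gives $h^{n-1}(E,\mathcal O_E)=1$. Adding up,
$$
\chi(\mathcal O_E)=1+0+\cdots+0+(-1)^{n-1}=1+(-1)^{n-1}.
$$

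Since $P\in X$ is a non-lt isolated lc singularity of index one we have $n\geq 2$, so the ranges in the argument above are nonempty and the two endpoints $i=0$ and $i=n-1$ are distinct; no degenerate case needs separate handling. The only thing that required substantive input from the theory is the identification $h^{n-1}(E,\mathcal O_E)=1$, and that is exactly the content of Remark \ref{4646}; everything else is either the Cohen--Macaulay criterion or formal properties of the spectral sequence.
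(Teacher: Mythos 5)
Your proof is correct and is exactly the argument the paper leaves implicit: the proposition is stated there without proof as an immediate consequence of Proposition \ref{46} together with Corollary \ref{333} (vanishing of $h^i(E,\mathcal O_E)$ for $0<i<n-1$ under the Cohen--Macaulay hypothesis), the connectedness of $D$ and the duality $h^{n-1}(E,\mathcal O_E)=h^0(E,\mathcal O_E)=1$ from \ref{217} and Remark \ref{4646}, and the Euler-characteristic invariance of the Mayer--Vietoris spectral sequence of \ref{2-11}, which are precisely the ingredients you assemble. Your explicit observations that $n\geq 2$ (so $i=0$ and $i=n-1$ are distinct) and that the spectral sequence is bounded with finite-dimensional terms are the right points to make the bookkeeping airtight.
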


\begin{rem}
Tsuchihashi's cusp singularities (cf.~\cite{t1} and \cite{t2}) give 
us many examples of three dimensional index one 
isolated lc singularities with $\mu=0$ which are not Cohen--Macaulay. 
\end{rem}

\section{Ishii's Hodge theoretic invariant}\label{sec5}

In this section, we give a Hodge theoretic characterization 
of our invariant $\mu$. 
It shows that 
our invariant $\mu$ coincides with 
Ishii's Hodge theoretic invariant. 

Let us quickly recall Ishii's definition of singularities of type $(0, i)$. 
For the details, see \cite[Section 7]{ishii-book} and \cite[2.6 and 
Definition 2.7]{ishii-q}. 

\begin{say}[Type $(0, i)$ singularities due to Shihoko Ishii] 
Let $P\in X$ be an $n$-dimensional isolated lc singularity with index one which is not lt. 
Let $f:Y\to X$ be a resolution such that 
$$K_Y=f^*K_X+F-E$$ as in \ref{211}. 
Shihoko 
Ishii proves that $H^{n-1}(E, \mathcal O_E)=\mathbb C$ (cf.~Proposition \ref{46} and 
Remark \ref{4646}).  
In \cite[Definition 7.4.5]{ishii-book} and \cite[Definition 2.7]{ishii-q}, 
she defines that the singularity $P\in X$ is of type $(0, i)$ 
if $$\Gr^W_iH^{n-1}(E, \mathcal O_E)\ne 0.$$ 
Note that 
$E$ is a projective simple normal crossing variety, $W$ is 
the weight filtration of the natural mixed Hodge structure on 
$H^{n-1}(E, \mathbb C)$, and that 
$H^{n-1}(E, \mathcal O_E)\simeq \Gr ^0_FH^{n-1}(E, \mathbb C)$ where 
$F$ is the natural Hodge filtration. Therefore, we have 
\begin{align*}
&\Gr ^W_kH^{n-1}(E, \mathcal O_{E})\\
&\simeq \Gr ^W_k \Gr ^0 _FH^{n-1}(E, \mathbb C)\\ 
&\simeq \Gr ^0_F\Gr ^W _k H^{n-1} (E, \mathbb C) 
\end{align*}
By Deligne's theory of mixed Hodge 
structures, we know that $0\leq i\leq n-1$. 
\end{say}

The main purpose of this section is to show that $\mu(P\in X)=i$ where 
$P\in X$ is of type $(0, i)$. 

The following theorem corresponds 
to \cite[Theorem 4.3]{ishii} in our framework. 
For the definition of {\em{sdlt pairs}}, see \cite[Definition 1.1]{fujino}. 
Let $(X, \Delta)$ be an sdlt pair. Then 
$X$ is $S_2$, normal crossing in codimension one, and every irreducible component of $X$ is 
normal. 
Let $V$ be sdlt. Then there is the smallest Zariski closed subset $Z$ of $V$ such that 
$V\setminus Z$ is a simple normal crossing variety and the codimension 
of $Z$ in $V$ is $\geq 2$. 
We define a {\em{stratum}} of $V$ as the closure of a stratum of $V\setminus Z$.  

\begin{thm}\label{new1} 
Let $V$ be an $m$-dimensional connected projective sdlt variety such that 
$K_V\sim 0$. 
Let $f:V'\to V$ be a projective birational morphism from a simple normal crossing 
variety $V'$. 
Assume that there is a Zariski open subset $U'$ {\em{(}}resp.~$U${\em{)}} 
of $V'$ {\em{(}}resp.~$V${\em{)}} 
such that $U'$ {\em{(}}resp.~$U${\em{)}} contains 
the generic point of any stratum of $V'$ {\em{(}}resp.~$V${\em{)}} 
and that $f$ induces an isomorphism between $U'$ and $U$. 
We further assume that the exceptional locus $\Exc(f)$ is a simple normal crossing 
divisor on $V'$ {\em{(}}cf.~\cite[Definition 2.11]{fujino-book}{\em{)}} and 
that 
$$
K_{V'}=f^*K_V+E
$$ 
where $E$ is effective. 
Then $H^m(V', \mathcal O_{V'})=\mathbb C$. 
Moreover, we obtain that 
\begin{align*}
&\Gr ^0_F\Gr ^W _k H^m (V', \mathbb C)\\ 
&\simeq \Gr ^W_k \Gr ^0 _FH^m(V', \mathbb C)\\ 
&\simeq \Gr ^W_kH^m(V', \mathcal O_{V'})\\
&=\begin{cases}
\mathbb C &\text{if}\quad  k=\mu\\ 
0 &\text{otherwise}
\end{cases}
\end{align*}
where $\mu$ is the dimension of the minimal 
stratum of $V'$. 
Note that $F$ is the Hodge filtration and 
$W$ is the weight filtration of the natural mixed Hodge structure on $H^m(V', \mathbb C)$. 
\end{thm}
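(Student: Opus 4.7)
The plan is to decouple the theorem into showing $\dim H^m(V',\mathcal O_{V'})=1$ globally, and then pinning down via the weight spectral sequence of \ref{2-11} that the single surviving graded piece lies in weight $\mu$.

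For the dimension count, I would generalize Lemma \ref{lemA} to the sdlt target $V$. The key inputs are: $\omega_V$ is invertible because $K_V\sim 0$; $V$ is semi-normal and $S_2$, so $\mathcal O_V\simeq f_*\mathcal O_{V'}$; and $V$ is Cohen--Macaulay, which follows componentwise from Proposition \ref{31}. Reid--Fukuda vanishing on $V'$ (applied as in the proof of Proposition \ref{31}, using the hypothesis that $f$ is an isomorphism over generic points of strata, together with $E$ effective) yields $R^if_*\omega_{V'}=0$ for $i>0$; since $f_*\omega_{V'}$ is torsion-free and generically isomorphic to the invertible sheaf $\omega_V$, we get $f_*\omega_{V'}\simeq\omega_V$. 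Grothendieck duality then gives $Rf_*\mathcal O_{V'}\simeq \mathcal O_V$, and Serre duality on the connected Gorenstein variety $V$ together with $K_V\sim 0$ yields $H^m(V,\mathcal O_V)\simeq H^0(V,\mathcal O_V)^\vee=\mathbb C$.

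For the weight, I would exploit the spectral sequence $E_1^{p,q}=H^q(V'^{[p]},\mathcal O_{V'^{[p]}})\Rightarrow H^{p+q}(V',\mathcal O_{V'})$ of \ref{2-11}, which is the $\Gr^0_F$-piece of Deligne's weight spectral sequence and is compatible with the weight filtration via $\Gr^W_k H^m(V',\mathcal O_{V'})\simeq E_\infty^{m-k,k}$. Because the total dimension is $1$, only one weight $k_0$ contributes, and $\Gr^W_{k_0}=\mathbb C$. Strata of dimension $k$ are precisely the irreducible components of $V'^{[m-k]}$; for $k<\mu$ there are no such strata, so $E_1^{m-k,k}=0$ and $k_0\ge\mu$. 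To produce a non-zero class in weight $\mu$, observe that a minimal stratum $W$ is smooth projective of dimension $\mu$; iterated adjunction along the sdlt structure starting from $K_V\sim 0$ gives $K_W\sim 0$, so $H^\mu(W,\mathcal O_W)\simeq\mathbb C$ and $E_1^{m-\mu,\mu}\ne 0$. The outgoing differential vanishes because $V'^{[m-\mu+1]}=\emptyset$, so every class in $E_1^{m-\mu,\mu}$ is automatically a $d_1$-cocycle.

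The main obstacle is to control the incoming differential from $E_1^{m-\mu-1,\mu}$ (coming from $(\mu+1)$-dimensional strata), in order to conclude $E_2^{m-\mu,\mu}\ne 0$ and that no later differential kills the survivor. I expect to handle this by induction on $m$, combined with the structural constraints supplied by Proposition \ref{216} and Remark \ref{rem48}: all minimal strata are mutually birational, and for each irreducible component $V'_{i_0}$ of $V'$ the divisor $\sum_{i\ne i_0}V'_i|_{V'_{i_0}}$ has at most two connected components. These rigidify the combinatorial pattern by which the $(\mu+1)$-dimensional strata glue the minimal ones and bound the rank of the incoming differential. An alternative route is a Mayer--Vietoris induction: split $V'=V'_0\cup\overline{V'\setminus V'_0}$, noting that each piece and the intersection inherits an sdlt structure of strictly smaller dimension with trivial adjoint divisor, and compare weight filtrations in the long exact sequence. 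Either way, once $\Gr^W_\mu\ne 0$ is secured, the one-dimensionality from the first step forces $\Gr^W_k=0$ for all $k\ne\mu$ and $\Gr^W_\mu=\mathbb C$, and the three claimed isomorphisms follow from the $E_2$-degeneration of the weight spectral sequence together with the strictness of morphisms of mixed Hodge structures with respect to $F$.
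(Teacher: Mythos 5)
Your first half is essentially the paper's Step 1, but the justification of Cohen--Macaulayness has a flaw: an abstract sdlt variety $V$ is not presented as $\llcorner \Delta\lrcorner$ inside a dlt pair, so Proposition \ref{31} does not apply to it directly, and Cohen--Macaulayness certainly does not follow ``componentwise'' (the irreducible components of $V$ being Cohen--Macaulay says nothing about the union; two planes in $\mathbb A^4$ meeting at a point have smooth components but fail $S_2$). This matters because your Grothendieck--Serre duality step needs $\omega^{\bullet}_V\simeq \omega_V[m]$, i.e.\ $V$ Cohen--Macaulay, so the step is not optional. The repair is exactly what the paper does: from $Rf_*\mathcal O_{V'}(E)\simeq \mathcal O_V$ (which you have) one gets that the composition $\mathcal O_V\to Rf_*\mathcal O_{V'}\to Rf_*\mathcal O_{V'}(E)\simeq \mathcal O_V$ is a quasi-isomorphism; dualizing and using $R^if_*\omega_{V'}=0$ gives $h^i(\omega^{\bullet}_V)=0$ for $i>-m$, hence $V$ is Cohen--Macaulay, and then $R^if_*\mathcal O_{V'}=0$, $f_*\mathcal O_{V'}\simeq \mathcal O_V$, and $H^m(V,\mathcal O_V)=\mathbb C$ follow as you intended.

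The genuine gap is in the second half, and it sits exactly where you flag ``the main obstacle.'' Your primary route via the spectral sequence of \ref{2-11} correctly yields $E_1^{m-k,k}=0$ for $k<\mu$ and $E_1^{m-\mu,\mu}\ne 0$, but the whole content of the theorem is that the minimal-stratum class survives the incoming differential $d_1\colon H^{\mu}(V'^{[m-\mu-1]},\mathcal O)\to H^{\mu}(V'^{[m-\mu]},\mathcal O)$, i.e.\ that $E_2^{m-\mu,\mu}\ne 0$; for this you offer only the expectation that Proposition \ref{216} and Remark \ref{rem48} ``bound the rank'' of $d_1$, with no actual bound or cocycle argument, so nothing excludes the a priori possibility that the weight-$\mu$ piece dies and the surviving weight is some $k_0>\mu$. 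The paper avoids the spectral sequence altogether and proves survival by the Mayer--Vietoris induction you mention only as a fallback, whose engine you omit: writing $V'=V'_1\cup V'_2$ by peeling off one component, it kills the end terms via $H^m(V'_i,\mathcal O_{V'_i})=0$, proved by Serre duality from $K_{V'_i}\sim F_i-V'_j|_{V'_i}$ with $F_i$ effective and $f_i$-exceptional, whence $(f_i^*H)^{m-1}\cdot K_{V'_i}<0$ and $H^0(V'_i,\mathcal O_{V'_i}(K_{V'_i}))=0$. Only this vanishing makes the connecting map $\delta$ surjective, hence surjective on each $\Gr^W_k$ by strictness of MHS morphisms, and lets the induction run on the $(m-1)$-dimensional intersection $V'_1\cap V'_2$ --- which is where Proposition \ref{216} and the connectedness lemma are actually used, to see that $V'_1\cap V'_2$ has at most two connected components, each satisfying the hypotheses with the same $\mu$. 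Your sketch of this route also misstates the structure: the two pieces $V'_1$ and $V'_2$ still have dimension $m$ (only the intersection drops), so they must be killed by the negativity argument, not handled by induction. A further small slip: a minimal stratum $W'$ of $V'$ does not satisfy $K_{W'}\sim 0$ (iterated adjunction on $V'$ leaves exceptional terms, since $K_{V'}=f^*K_V+E$); one gets $h^{\mu}(W',\mathcal O_{W'})=1$ by passing to the birationally equivalent minimal stratum of $V$, cf.\ Remark \ref{rem-nn}. As it stands, the proposal does not close the key step.
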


\begin{proof}
First we prove that $H^m(V', \mathcal O_{V'})=\mathbb C$. 

\begin{step}
Since $V$ is simple normal crossing in codimension one and $S_2$, 
$V$ is semi-normal. We can easily check that $f$ has connected 
fibers by the assumptions. 
Therefore, we obtain $f_*\mathcal O_{V'}\simeq \mathcal O_V$. 
We note that $E$ is $f$-exceptional by the assumptions. 
Since $E$ is effective, $f$-exceptional, and $V$ satisfies Serre's $S_2$ condition, 
we see that $f_*\mathcal O_{V'}(E)\simeq \mathcal O_V$. 
On the other hand, we obtain 
$R^if_*\mathcal O_{V'}(E)\simeq R^if_*\mathcal O_{V'}(K_{V'})=0$ for every 
$i>0$ (cf.~\cite[Lemma 2.33]{fujino-book}). Therefore, 
we have $Rf_*\mathcal O_{V'}(E)\simeq \mathcal O_V$ in the derived category. 
By the same arguments as in the proof of 
Proposition \ref{31}, 
we obtain that $V$ is Cohen--Macaulay. Moreover, 
$R^if_*\mathcal O_{V'}=0$ for every $i>0$ (see the proof of 
Proposition \ref{31}) and $f_*\mathcal O_{V'}\simeq \mathcal O_V$. 
Thus, $H^m(V', \mathcal O_{V'})\simeq H^m(V, \mathcal O_V)=\mathbb C$. 
We note that $K_V\sim 0$ and $V$ is Cohen--Macaulay. 
\end{step}

We use the induction on dimension for the latter statement. 
The statement is obvious for a $0$-dimensional variety. 
\begin{step}
When $V$ is irreducible, the statement is obvious. 
It is because $V'$ is a smooth connected projective variety. 
So, $H^m(V', \mathbb C)$ has the natural pure Hodge structure 
of weight $m$. 
\end{step}
\begin{step}\label{ste33} 
From now on, we assume that $V$ is reducible. 
Let $V'_1$ be an irreducible component of $V'$ and 
let $V_1$ be the corresponding irreducible component of $V$. 
We write $V'=V'_1\cup V'_2$ and $V=V_1\cup V_2$. 
Consider the Mayer-Vietoris exact sequence: 
\begin{align*}\tag{$\spadesuit$}\label{siki1}
H^{m-1}(V'_1\cap V'_2, \mathcal O_{V'_1\cap V'_2})
&\overset{\delta}\to H^m(V', \mathcal O_{V'}) \\ &\to H^m(V'_1, \mathcal O_{V'_1})\oplus 
H^m(V'_2, \mathcal O_{V'_2}). 
\end{align*}
By the Serre duality, $H^m(V'_i, \mathcal O_{V'_i})$ is 
dual to $H^0(V'_i, \mathcal O_{V'_i}(K_{V'_i}))$. 
We put $f_i=f|_{V'_i}$ for $i=1, 2$. 
We can write 
$$
K_{V'_i}+V'_j|_{V'_i}=f_i^*(K_{V_i}+V_j|_{V_i})+E|_{V'_i}\sim E|_{V'_i}=:F_i
$$ 
for $\{i, j\}=\{1, 2\}$ where $F_i$ is an effective $f_i$-exceptional 
divisor. 
We note that $K_{V_i}+V_j|_{V_i}=K_V|_{V_i}\sim 0$. 
Let $H$ be an ample Cartier divisor on $V$. 
Then $(f_i^*H)^{m-1}\cdot K_{V'_i}<0$ because 
$V'_j|_{V'_i}\ne 0$ for $i=1, 2$. 
Thus $H^0(V'_i, \mathcal O_{V'_i}(K_{V'_i}))=0$ for $i=1, 2$. 
This means that $H^m(V'_i, \mathcal O_{V'_i})=0$ for 
$i=1, 2$. 
So the last term in (\ref{siki1}) is zero. 
Therefore, 
we obtain that 
$$
\Gr ^W_kH^{m-1}(V'_1\cap V'_2, \mathcal O_{V'_1\cap V'_2})\to 
\Gr^W_kH^m(V', \mathcal O_{V'})
$$ 
is surjective for every $k$. 
We note that $V'_1\cap V'_2$ is an $(m-1)$-dimensional projective simple normal crossing 
variety and that $V'_1\cap V'_2$ has at most two connected components 
by Proposition \ref{216} and \cite[Theorem 5.48]{km}. 
Note that $(V_1, V_2|_{V_1})$ is dlt and 
$K_{V_1}+V_2|_{V_1}\sim 0$. 
Moreover, each connected component of $V'_1\cap V'_2$ satisfies the 
assumptions of this theorem and the dimension of the minimal stratum of 
each connected component of $V'_1\cap V'_2$ is also $\mu$. 
Therefore, by the induction on dimension, we obtain that 
$\Gr^W_kH^m(V', \mathcal O_{V'})\ne 0$ if and only if $k=\mu$. 
\end{step}
We obtain all the desired results. 
\end{proof}

\begin{rem}\label{rem-nn} 
By Step \ref{ste33} in the proof of Theorem \ref{new1}, 
we obtain the following description. 
Let $C'$ be any minimal stratum of $V'$. Then we 
obtain an isomorphism 
$$
\mathbb C=H^{\mu}(C', \mathcal O_{C'})
\underset{\delta_{\mu}}\simeq \cdots\underset{\delta_k}\simeq \cdots 
\underset{\delta_{m-1}}\simeq H^m(V', \mathcal O_{V'})=\mathbb C
$$
where each $\delta_k$ is the connecting homomorphism 
of a suitable Mayer--Vietoris exact sequence for $\mu \leq k\leq m-1$. 
Note that $C$ has only canonical singularities with $K_C\sim 0$, where 
$C=f(C')$. 
\end{rem}

\begin{rem}[Semi-stable minimal models for 
varieties with trivial canonical divisor] 
Let $f:X\to Y$ be a projective surjective morphism from a smooth 
quasi-projective 
variety $X$ to a smooth 
quasi-projective curve $Y$. Assume that 
$f$ is smooth over $Y\setminus P$, $K_{f^{-1}(Q)}\sim 0$ for every 
$Q\in Y\setminus P$, and 
$f^*P$ is a reduced simple normal crossing divisor on $X$. 
Then we obtain a relative good minimal model $f': X'\to Y$ of $f:X\to Y$ by 
\cite[Theorem 1.1]{fujino-ss}. 
Then the special fiber $S=f'^*P$ is 
an sdlt variety with $K_S\sim 0$. So, we 
can apply Theorem \ref{new1} to $S$.   
\end{rem}

As an application of Theorem \ref{new1}, we obtain the following theorem. 

\begin{thm}\label{new2} 
Let $P\in X$ be an isolated lc singularity with index one which is not 
lt. 
Then $P\in X$ is of type $(0, i)$ if and only if 
$\mu(P\in X)=i$. 
\end{thm}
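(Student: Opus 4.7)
The plan is to derive Theorem \ref{new2} directly from Theorem \ref{new1} by exhibiting the essential divisor $E$ as the source of a birational morphism from a simple normal crossing variety to an sdlt variety with trivial canonical class, and then reading off the weight filtration from Theorem \ref{new1}.

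First, I would take a dlt blow-up $g:Z\to X$ as in Lemma \ref{conj1}, obtaining $(Z,D)$ $\mathbb{Q}$-factorial dlt with $K_Z+D=g^*K_X$, $g$ an isomorphism outside $P$, and $D$ a reduced divisor. After shrinking $X$ to an affine neighborhood of $P$ (and compactifying projectively if needed), we may assume $K_X\sim 0$ near $P$, so $K_Z+D\sim 0$ and $K_D\sim 0$ by adjunction. By the standard structure theory of dlt pairs, $D$ is sdlt: each irreducible component is normal, $D$ is simple normal crossing in codimension one, and each component inherits a dlt structure by adjunction. Thus $V:=D$ is an $(n-1)$-dimensional connected projective sdlt variety with $K_V\sim 0$.

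Next I would invoke the construction in \ref{217}, producing $h:Y\to Z$ with $f=g\circ h$ and $K_Y+E-F=h^*(K_Z+D)$, where $h$ is an isomorphism over the generic point of every lc center of $(Z,D)$ and $\Supp f^{-1}(P)\cup\Exc(h)$ is simple normal crossing on $Y$. The restriction $h|_E:E\to D$ is then a projective birational morphism, and I would verify the hypotheses of Theorem \ref{new1} with $V'=E$ and $V=D$: a Zariski open $U\subset D$ containing the generic points of all strata of $D$ exists, and $U':=(h|_E)^{-1}(U)\subset E$ contains the generic points of all strata of $E$ with $h|_E:U'\to U$ an isomorphism; the exceptional locus $\Exc(h|_E)$ is simple normal crossing on $E$ by construction; and restricting $K_Y+E-F=h^*(K_Z+D)$ to $E$ yields $K_E=(h|_E)^*K_D+F|_E$ with $F|_E$ effective (since $F$ and $E$ share no irreducible component) and $(h|_E)$-exceptional.

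Applying Theorem \ref{new1} then gives $\Gr^W_k H^{n-1}(E,\mathcal{O}_E)=\mathbb{C}$ precisely when $k$ equals $\mu_E$, the dimension of the minimal stratum of $E$, and $0$ otherwise. Since $\mu_E=\mu(P\in X)$ by the characterization in \ref{4111}, and $P\in X$ is of type $(0,i)$ iff $\Gr^W_i H^{n-1}(E,\mathcal{O}_E)\ne 0$, both directions of the equivalence follow at once. The main obstacle will be the careful verification of the hypotheses of Theorem \ref{new1} in the second step—particularly the effectiveness and $h|_E$-exceptionality of $F|_E$ and the existence of the stratum-preserving open isomorphism $U'\simeq U$—but these are essentially automatic from the properties already established in \ref{217} and the dlt blow-up machinery, so the proof reduces to bookkeeping rather than new construction.
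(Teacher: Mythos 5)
Your overall strategy --- applying Theorem \ref{new1} to a birational morphism from the essential divisor onto the boundary $D$ of a dlt blow-up --- is close in spirit to the paper's, but the step where you declare the hypotheses of Theorem \ref{new1} ``essentially automatic'' for $h|_E\colon E\to D$ is exactly where the argument breaks. The properties you quote from \ref{217} (that $h$ is an isomorphism over the generic point of every lc center of $(Z,D)$, hence that every stratum of $E$ is generically isomorphic to a stratum of $D$, and that $F|_E$ is effective and $h|_E$-exceptional) hold only at the \emph{intermediate} stage of that construction, before $f^{-1}(P)$ has been made into a simple normal crossing divisor. Ishii's type, however, is defined via a resolution as in \ref{211}, and the later steps of \ref{217} (principalization of the ideal of $f^{-1}(P)$ and the further blow-ups making $\Supp f^{-1}(P)$ snc) may have centers that are strata of $E$; blowing up such a stratum creates a new $h$-exceptional component of the essential divisor with discrepancy $-1$. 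After that, $h|_E\colon E\to D$ is no longer birational, and no open subset $U'\subset E$ containing the generic points of \emph{all} strata of $E$ can map isomorphically into $D$, so Theorem \ref{new1} does not apply to the resolution you actually need. The paper is careful on this point: in \ref{217} it only tracks the vector spaces $H^i(E,\mathcal O_E)$ through those later blow-ups, precisely because the stratum-preserving open isomorphism is lost there.

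To repair your argument you would need either (a) the independence of Ishii's type from the choice of resolution together with the existence of \emph{some} \ref{211}-resolution for which your hypotheses survive (which the cited resolution theorems do not supply), or (b) a transfer of the weight computation from your intermediate $E$ to the actual essential divisor by maps that are morphisms of mixed Hodge structures --- an isomorphism of $\mathcal O$-cohomology as in Lemma \ref{42} does not by itself control $\Gr^W$. The paper's proof implements (b) in a robust form: it applies Theorem \ref{new1} only on the dlt side, to $\beta\colon T\to E'$ coming from the common resolution in Remark \ref{rem48}, where the hypotheses hold by construction, and then transfers $\Gr^W_\mu\neq 0$ to $H^{n-1}(E,\mathcal O_E)$ for an \emph{arbitrary} resolution as in \ref{211} via the commutative square built from Lemma \ref{lemA} and the Mayer--Vietoris connecting isomorphisms of Remark \ref{rem-nn}. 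The decisive device, absent from your proposal, is routing the comparison through $H^{\mu}(C,\mathcal O_C)$ for a minimal stratum $C$: this group is pure of weight $\mu$, so any chain of MHS morphisms identifying it with $H^{n-1}(E,\mathcal O_E)=\mathbb C$ forces $\Gr^W_\mu H^{n-1}(E,\mathcal O_E)\neq 0$, without ever requiring $E\to D$ itself to satisfy the hypotheses of Theorem \ref{new1}.
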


\begin{proof}
We use the notations in Remark \ref{rem48}. 
Let $f:Y\to X$ be as in \ref{211}. 
First, we apply Theorem \ref{new1} to 
$\beta:T\to E'$. 
Then we obtain 
$$\Gr^W_\mu H^{n-1}(T, \mathcal O_T)\ne 0$$ where 
$\mu=\mu(P\in X)$. 
Next, we consider $\alpha:T\to E$. Let $C$ be a minimal 
stratum of $E$ and let $C'$ be the corresponding stratum of $T$. 
By Step \ref{ste33} in the proof of Theorem \ref{new1}, Remark \ref{rem-nn}, and 
Lemma \ref{lemA}, we can construct the 
following commutative diagram. 
$$
\begin{CD}
\mathbb C=H^{\mu}(C', \mathcal O_{C'})@>{\delta_1}>> H^{n-1}(T, \mathcal O_T)=\mathbb C\\
@A{\alpha|_{C'}^*}AA @AA{\alpha|_T^*}A\\
\mathbb C=H^{\mu}(C, \mathcal O_{C})@>{\delta_2}>> H^{n-1}(E, \mathcal O_E)=\mathbb C
\end{CD}
$$
Note that $\delta_1$ and $\delta_2$ are 
isomorphisms, which are the compositions of 
the connecting homomorphisms of suitable Mayer--Vietoris 
exact sequences (cf.~Remark \ref{rem-nn}), 
and that $\alpha|_{C'}^*$ and $\alpha|_T^*$ are isomorphisms (cf.~Lemma \ref{lemA}). 
By taking $\Gr^W$, we obtain that 
$$
\Gr ^W_\mu H^{n-1}(E, \mathcal O_E)\ne 0. 
$$ 
This means that $P\in X$ is of type 
$(0, \mu)$. We note that 
$$
\Gr ^W_\mu H^{\mu}(C, \mathcal O_C)=H^{\mu}(C, \mathcal O_C)$$ since 
$C$ is smooth and projective. 
\end{proof}

We note that Theorem \ref{new2} also follows from \cite[Proposition 7.4.8]{ishii-book} and 
\cite{ishii-letter} (see \cite[Remark 4.13]{fujino2}). 

Anyway, by Theorem \ref{new2}, our approach in \cite{fujino2} and 
this paper is compatible with Ishii's theory developed 
in \cite{ishii}, \cite{ishii-book}, and \cite{ishii-q}.  

%%%%%%%%%%%%%%%%%%%%%%%%%%%%%%%%%

\end{document}